\newtheorem{theorem}{Theorem}[section]
\newtheorem{lemma}{Lemma}[section]
\newtheorem{proposition}{Proposition}[section]
\newtheorem{corollary}{Corollary}[section]
\def \C {\mathbb{C}}
\def \R {\mathbb{R}}
\def \S {\mathbb{S}}
\def \p {\partial}
\begin{document}

\title{Singular Solutions for the Conformal Dirac-Einstein Problem on the Sphere}

\author{ Ali Maalaoui$^{(1)}$ \& Vittorio Martino$^{(2)}$  \& Tian Xu$^{(3)}$}
\addtocounter{footnote}{1}
\footnotetext{Department of Mathematics, Clark University, 950 Main Street, Worcester, MA 01610, USA. E-mail address: 
{\tt{amaalaoui@clarku.edu}}}
\addtocounter{footnote}{2}
\footnotetext{Dipartimento di Matematica, Alma Mater Studiorum - Universit\`a di Bologna. E-mail address:
{\tt{vittorio.martino3@unibo.it}}}
\addtocounter{footnote}{3}
\footnotetext{Department of Mathematics, Zhejiang Normal University, Jinhua, Zhejiang, 321004, China. E-mail address:
{\tt{xutian@amss.ac.cn}}}

\date{}
\maketitle

\vspace{5mm}

{\noindent\bf Abstract} {\small In this paper we investigate the existence of singular solutions to the conformal Dirac-Einstein system. Because of its conformal invariance, there are many similarities with the classical construction of singular solutions for the Yamabe problem. We construct here a family of singular solutions, on the three dimensional sphere, having exactly two singularities.
}

\vspace{5mm}

\noindent
{\small Keywords: Dirac-Einstein problem; Singular solutions.} %; Delaunay-type solutions

\vspace{4mm}

\noindent
{\small 2010 MSC. Primary: 53C27.  Secondary: 35R01.}

\vspace{4mm}

%%%%%%%%%%%%%%%%%%%%%%%%%%%%%%%%%%%%%%%%%%%%%%%%%%%%%%%%%%%%%%%%%%%%%%%%%%%%%%%%%%%%%%%%%%%%%%%%%%%%%%%%%%%%%%%%%%%%%%%%%%%%%%%%%%%%
%%%%%%%%%%%%%%%%%%%%%%%%%%%%%%%%%%%%%%%%%%%%%%%%%%%%%%%%%%%%%%%%%%%%%%%%%%%%%%%%%%%%%%%%%%%%%%%%%%%%%%%%%%%%%%%%%%%%%%%%%%%%%%%%%%%%
%%%%%%%%%%%%%%%%%%%%%%%%%%%%%%%%%%%%%%%%%%%%%%%%%%%%%%%%%%%%%%%%%%%%%%%%%%%%%%%%%%%%%%%%%%%%%%%%%%%%%%%%%%%%%%%%%%%%%%%%%%%%%%%%%%%%
%%%%%%%%%%%%%%%%%%%%%%%%%%%%%%%%%%%%%%%%%%%%%%%%%%%%%%%%%%%%%%%%%%%%%%%%%%%%%%%%%%%%%%%%%%%%%%%%%%%%%%%%%%%%%%%%%%%%%%%%%%%%%%%%%%%%

\section{Introduction and main results}

\noindent
The conformal Dirac-Einstein system on a three dimensional spin manifold $(M,g,\Sigma_{g}M)$ consists of two equations:

\begin{equation}\label{eq1}
\left\{\begin{array}{ll}
L_{g} u=|\psi|^{2}u\\
& \text{on } M ,\\
D_{g}\psi=|u|^{2}\psi
\end{array}
\right.
\end{equation}
where $L_{g}=-\Delta_{g}+\frac{R_{g}}{8}$ is the conformal Laplacian and $D_{g}$ is the Dirac operator (we refer to the next section for further details). This system corresponds to the Euler-Lagrange equation of the energy functional
$$E:H^{1}(M)\times H^{\frac{1}{2}}(\Sigma_{g}M)\to \R ,$$
$$ E(u,\psi)=\int_{M}uL_{g}u+\langle D_{g}\psi,\psi \rangle \ dv_{g}-\int_{M}|u|^{2}|\psi|^{2}\ dv_{g},$$ 
here $\langle \cdot, \cdot \rangle$ denotes the compatible Hermitian metric on $\Sigma_{g}M$. This energy functional is derived from the total Dirac-Einstein energy functional 
$$\mathcal{E}(g,\psi)=\int_{M}R_{g}+\langle D_{g} \psi, \psi \rangle - |\psi|^{2}\ dv_{g},$$
by restricting the variations to a fixed conformal class of the metric. There have been many investigations of this functional since it is an extension of the classical Einstein-Hilbert functional. For instance, we refer the reader to \cite{FSY2,KF,MM2022}. The conformal version of the problem was also investigated in \cite{MM2019, GMM} on compact manifolds and in \cite{BMM} on manifolds with boundaries. We also point out that, in the two dimensional case, the conformal version of the problem leads to the super-Liouville equation which was investigated in \cite{J1,J2}. As in the case of the Yamabe problem \cite{MP1,MP2,MS, Schoen1988, SY1988}, the $Q$-curvature problem \cite{ACDRS,HS}, the CR Yamabe problem \cite{GMM2} and the Spinorial Yamabe problem \cite{MST}, a natural question that arises is the existence of singular solutions to $(\ref{eq1})$.

\noindent
In this paper, we propose to construct singular solutions for $(\ref{eq1})$ on the standard three sphere. As in the classical case, we believe that these solutions are the building blocks of singular solutions on general manifolds by means of a gluing construction, but this is beyond the scope of our investigation for now.
So let $(\S^3, g_s, \Sigma \S^3)$ be the unit sphere of dimension three equipped with its standard metric $g_s$ and its canonical spin bundle $\Sigma \S^3$. We are interested in finding singular solutions of (\ref{eq1}) on $\S^3\setminus \Lambda$, where $\Lambda$ is a pair of antipodal points on $\S^3$, that is we are looking for solutions of
\begin{equation}\label{eqspheresingular}
\left\{\begin{array}{ll}
L_{g_s} u=|\psi|^{2}u\\
& \text{on } \S^3\setminus \Lambda ,\\
D_{g_s}\psi=|u|^{2}\psi
\end{array}
\right.
\end{equation}
in the distributional sense, with $u$ and $\psi$ singular on $\Lambda$. This can be seen as a coupled singular Yamabe and spinorial Yamabe problem. We will assume for now that $\Lambda=\{N,S\}$, namely the north and south poles of the sphere $\S^{3}$.

\noindent
Again, by means of the stereographic projection, the system (\ref{eqspheresingular}) turns into
\begin{equation}\label{eqR3singular}
\left\{\begin{array}{ll}
-\Delta u=|\psi|^{2}u\\
& \text{on } \R^3\setminus \{0\} ,\\
D\psi=|u|^{2}\psi
\end{array}
\right.
\end{equation}

\noindent
One can approach the previous equations in two different ways, which however lead to the same system. The first one is more geometric and it starts by noticing that $\R^{3}\setminus \{0\}$ is conformal to $\R \times \S^{2}$ via the conformal map 
$$\big(r,\theta\big)\to \big(-\ln(r),\theta\big)=\big(t,\theta\big) .$$ 
Now, using the conformal invariance of the Laplacian and the Dirac operator, the problem (\ref{eqR3singular}) becomes:

\begin{equation}\label{eqRtimesS2}
\left\{\begin{array}{ll}
L_{g_{prod}} w =|\phi|^{2} w\\
& \text{on } \R \times \S^{2} ,\\
D_{g_{prod}} \phi= w^{2}\phi
\end{array}
\right.
\end{equation}
where $L_{g_{prod}}=-\Delta_{t,\theta}+\frac{1}{4}$ and $D_{g_{prod}}$ are the conformal Laplacian and the Dirac operator on $\R \times \S^{2}$ equipped with the canonical product metric $g_{prod}=dt^2 + d\theta^2$. We recall from \cite{BX, SX2020} that the Spin bundle over $\R\times \S^{2}$ can be identified with
$$\Sigma_{g_{prod}}\Big(\R\times \S^{2}\Big)=\Big(\Sigma_{dt^{2}}\R\oplus \Sigma_{dt^{2}}\R\Big) \otimes \Sigma_{d\theta^{2}}\S^{2}$$
and the Dirac operator is given by
$$D_{g_{prod}}\Big((\psi_{1}\oplus \psi_{2})\otimes \varphi\Big)=\Big(D_{dt^{2}}\psi_{1}\oplus -D_{dt^{2}}\psi_{2}\Big)\otimes \varphi + \omega_{\C}\cdot_{dt^{2}}(\psi_{1}\oplus \psi_{2})\otimes D_{d\theta^{2}}\varphi,$$
where $\psi_{1},\psi_{2} \in C^{\infty}(\R,\Sigma_{dt^{2}}\R)$, $\varphi \in C^{\infty}(\S^{2},\Sigma_{d\theta^{2}}\S^{2})$, $\omega_{\C}$ is the Chirality operator on $\R$ and "$\cdot_{dt^{2}}$" is the Clifford multiplication on $\Sigma_{dt^{2}}\R$. Choosing $\varphi$ to be a Killing spinor on $\S^{2}$, we have that $\varphi$ is an eigenspinor for the Dirac operator on $\S^{2}$ with eigenvalue $1$ and since $|\varphi|$ is constant, we can assume it to be $1$. For the scalar part $w$, we will look for solution of the form $w(t,\theta)=u(t)$. Then, by taking into account the natural splitting of the spin bundle, one gets the following system for $\psi=\psi^{+}\oplus \psi^{-}$:
%$$\left\{\begin{array}{lll}
%-u''+\frac{1}{4}u=(|\psi_{1}^{+}|^{2}+\psi_{1}^{-}|^{2})u\\
%\\
%\displaystyle i\frac{d\psi_{1}^{+}}{dt}+i\frac{n-1}{2}\psi_{1}^{-}=(|\psi_{1}^{+}|^{2}+|\psi_{1}^{-}|^{2})\psi_{1}^{+}\\
%\\
%\displaystyle i\frac{d\psi_{1}^{-}}{dt}+i\frac{n-1}{2}\psi_{1}^{+}=(|\psi_{1}^{+}|^{2}+|\psi_{1}^{-}|^{2})\psi_{1}^{-}
%\end{array}
%\right.
%$$
\begin{equation}\label{equationupsit}
\left\{\begin{array}{lll}
-u''+\frac{1}{4}u=(|\psi^{+}|^{2}+|\psi^{-}|^{2})u\\
\\
\displaystyle i\frac{d\psi^{+}}{dt}+i\psi^{-}=u^2 \psi^{+}\\
\\
\displaystyle - i\frac{d\psi^{-}}{dt}-i\psi^{+}=u^2\psi^{-}
\end{array}
\right.
\end{equation}
Since $\psi$ is a complex valued function, if we put $\psi^{+}=a+ib$ and $\psi^{-}=a-ib$ in the previous system, we get the following system:
\begin{equation}\label{systemab}
\left\{\begin{array}{ll}
u''=-(a^{2}+b^{2})u+\frac{1}{4}u\\
a'=-a +u^{2}b\\
b' =b- u^{2}a.
\end{array}
\right.
\end{equation}

%\begin{equation}\label{ham}
%\left\{\begin{array}{ll}
%a'=-a +u^{2}b\\
%b' =b- u^{2}a\\
%u'=v\\
%v'=(a^{2}+b^{2}-\frac{1}{4})u
%\end{array}
%\right.
%\end{equation}

\noindent
The second approach is more analytical in nature. It was used in several works as an ansatz to find particular solutions for equations involving the Dirac operator (we refer the reader to \cite{FSY2, Rot, Soler, MST} and the references therein). We start by defining the space of ``radial" spinors $E(\R^{3})$ as follows:
%$$E(\R^{3})=\left\{\psi(x)=f_{1}(|x|)\gamma_{0}+\frac{f_{2}(|x|)}{|x|}x\cdot \gamma_{0}: x\in \R^{3}, f_{1},f_{2}\in C^{\infty}(0,\infty) \text{ and } \gamma_{0}\in S^{4}_{\C}\right\},$$
\begin{equation}\label{ER3}
E(\R^{3})=\left\{\psi(x)=f_{1}(|x|)\gamma_{0}+\frac{f_{2}(|x|)}{|x|}x\cdot \gamma_{0}\; ; x\in \R^{3}, f_{1},f_{2}\in C^{\infty}(0,\infty), \gamma_{0}\in \S^{2}_{\C}\right\},
\end{equation}
where "$\cdot$" stands for the Clifford multiplication and $\S^{2}_{\C}$ denotes the complex unit sphere in $\C^2$; we notice that this space is stable under the action of the Dirac operator. This second approach relies on the ansatz that $u(x)=u(|x|)$ and $\psi \in E(\R^{3})$: so, if in (\ref{eqR3singular}) we apply the Emden-Fowler change of variable $r=e^{-t}$ and write $f_{1}(r)=-a(t)e^{t}$, $f_{2}(r)=b(t)e^{t}$, we obtain again the system $(\ref{systemab})$.

\noindent
Now, if we introduce the variable $v:=u'$, we see that the system $(\ref{systemab})$ can be viewed as a first order Hamiltonian system
\begin{equation}\label{ham}
\left\{\begin{array}{ll}
u'=v\\
v'=-\left(a^{2}+b^{2}-\frac{1}{4}\right)u\\
a'=-a +u^{2}b\\
b' =b- u^{2}a
\end{array}
\right.
\end{equation}
where the Hamiltonian function $H$ is given by:
\begin{align}\label{HamiltonianH}
H(u,v,a,b)&=\frac{v^{2}}{2}+\frac{u^{2}}{2}\left(a^{2}+b^{2}-\frac{1}{4} \right)-ab\notag\\
&=\frac{v^{2}}{2}+\frac{(u^{2}-1)}{2}\left(a^{2}+b^{2}-\frac{1}{4}\right)+\frac{1}{2}\left((a-b)^{2}-\frac{1}{4}\right) . \notag
\end{align}
Hence,
\begin{equation*}%\label{hamdot}
\left\{\begin{array}{ll}
\dot u = \displaystyle \frac{\p H}{\p v} (u,v,a,b)\\
\\
\dot v =-\displaystyle \frac{\p H}{\p u} (u,v,a,b)\\
\\
\dot a = \displaystyle \frac{\p H}{\p b} (u,v,a,b)\\
\\
\dot b =-\displaystyle \frac{\p H}{\p a} (u,v,a,b).
\end{array}
\right.
\end{equation*}

\noindent
The equilibrium points of this system, with $u\geq0$,  are
$$P_0=(0,0,0,0), \qquad P_{\pm}=\left( 1,0, \pm\frac{1}{2\sqrt{2}},\pm\frac{1}{2\sqrt{2}} \right),$$
and they correspond to the energy levels $H=0$ and $H=-\frac{1}{8}$; in particular $P_0$ is a saddle point and $P^{\pm}$ are center points. This structure is similar to the case of the Spinorial Yamabe in \cite{MST}. From the analysis of this Hamiltonian system, we have the following result:
\begin{theorem}\label{maintheorem}
Let $T_0=2^{\frac{3}{4}}\pi$. Then there exist $T_{0}<T_{1}\leq T_{2}$ such that for $T\in (T_0,T_{1})\cup (T_{2},+\infty)$, there exists a family $(u_{T},v_{T},a_{T},b_{T})$ of non-constant $2T$-periodic solutions to $(\ref{ham})$. Moreover,
\begin{itemize}
\item[(i)] when $T\to T_0$,
            $$\left(u_{T},v_{T},a_{T},b_{T}\right)\to \left(1,0,\frac{1}{2\sqrt{2}},\frac{1}{2\sqrt{2}} \right),$$
in  $C^{2,\alpha}_{loc}(\R) \times C^{2,\alpha}_{loc}(\R)\times  C^{1,\beta}_{loc}(\R) \times  C^{1,\beta}_{loc}(\R), \; 0 < \alpha, \beta < 1 ;$
\item[(ii)] when $T\to \infty$, there exists $t_{0}\in \R$ such that
             $$\left( u_{T},v_{T}, a_{T}, b_{T} \right) \to \left( u_{0}(\cdot-t_{0}),u'_{0}(\cdot-t_{0}), a_{0}(\cdot-t_{0}), b_{0}(\cdot-t_{0}) \right),$$ 
in $ C^{2,\alpha}_{loc}(\R) \times C^{2,\alpha}_{loc}(\R)\times  C^{1,\beta}_{loc}(\R) \times  C^{1,\beta}_{loc}(\R),\; 0 < \alpha, \beta < 1 ,$
where 
$$\left(u_0(t),a_0(t),b_{0}(t)\right):=\Big( 2^{-\frac{1}{4}}\cosh^{-\frac{1}{2}}(t), \frac{3}{2\sqrt{2}}e^{-\frac{t}{2}}\cosh^{-\frac{3}{2}}(t), \frac{3}{2\sqrt{2}}e^{\frac{t}{2}}\cosh^{-\frac{3}{2}}(t)\Big)$$
is the solution of (\ref{ham}) given by the nontrivial homoclinic orbit.
 \end{itemize}
\end{theorem}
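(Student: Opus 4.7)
The plan is to treat (\ref{ham}) as an autonomous Hamiltonian system on $\R^4$ with conserved $H$, and to combine two classical dynamical tools: a Lyapunov centre construction at $P_+$, which produces the short-period branch $T\in (T_0,T_1)$, and a periodic-orbit continuation near a homoclinic loop of the saddle $P_0$, which produces the long-period branch $T\in (T_2,+\infty)$. A direct linearisation at $P_+=(1,0,\frac{1}{2\sqrt 2},\frac{1}{2\sqrt 2})$ gives the characteristic polynomial $\lambda^4-2$, hence the eigenvalues $\pm 2^{1/4},\pm i\,2^{1/4}$: $P_+$ is of saddle--centre type, with an elliptic direction of frequency $2^{1/4}$ (setting the critical period $T_0=2^{3/4}\pi$). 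At $P_0=0$ the linearisation has eigenvalues $\pm 1/2,\pm 1$, so $P_0$ is a fully hyperbolic saddle with a two-dimensional stable and a two-dimensional unstable manifold, both contained in the three-dimensional energy level $\{H=0\}$.

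\emph{Short-period regime.} Since the elliptic pair $\pm i\,2^{1/4}$ is non-resonant with the hyperbolic pair $\pm 2^{1/4}$, the Weinstein--Moser Lyapunov centre theorem for Hamiltonian systems supplies, through $P_+$, a smooth two-dimensional symplectic submanifold foliated by a one-parameter family of non-constant periodic orbits whose minimal period is analytic in the energy level and converges to the linear value as the orbits shrink to $P_+$. The system also admits the involution $R\colon (u,v,a,b)(t)\mapsto(u(-t),-v(-t),b(-t),a(-t))$, which fixes $P_+$ and preserves the Lyapunov submanifold; selecting $R$-symmetric brake orbits gives the claimed $2T$-periodic structure, with a natural half-period parameter $T$. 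The $C^{2,\alpha}_{loc}\times C^{1,\beta}_{loc}$ convergence to the equilibrium as $T\to T_0$ is then immediate from the analytic dependence on the Lyapunov parameter.

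\emph{Long-period regime.} For part (ii) I would first verify by direct substitution that $(u_0,u_0',a_0,b_0)$ solves (\ref{ham}) and lies on $\{H=0\}$ with both its $\alpha$- and $\omega$-limits equal to $P_0$; geometrically this is the $R$-symmetric intersection of $W^s(P_0)$ and $W^u(P_0)$ inside the energy level. Once the non-degeneracy of this intersection is established -- a Melnikov-type calculation exploiting the $R$-symmetry and the explicit exponential decay $u_0,a_0,b_0\sim e^{-|t|/2}$ of $\gamma_0$ -- a standard Shil'nikov / blue-sky continuation (equivalently, a shooting argument for the $R$-symmetric half-orbit) yields, for every $T>T_2$ with $T_2$ sufficiently large, a $2T$-periodic solution $\gamma_T$ that shadows $\gamma_0$ on $[-T,T]$ and spends only exponentially short time near $P_0$. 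The claimed convergence to a shifted copy of $\gamma_0$ in $C^{2,\alpha}_{loc}\times C^{1,\beta}_{loc}$ then follows from standard continuous dependence for ODEs on compact time intervals.

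\emph{Main obstacle.} The hardest point is bridging the two regimes: a priori the Lyapunov family emanating from $P_+$ need not extend continuously up to the homoclinic level $H=0$, which is exactly why the statement leaves open the possibility of a gap $[T_1,T_2]$. Determining $T_1$ and $T_2$ precisely -- and in particular deciding whether $T_1=T_2$ so that the gap closes -- would require a quantitative study of the period function $T(H)$ along the family (plausibly expressible in terms of elliptic integrals) together with a Melnikov transversality computation along $\gamma_0$. The two endpoint constructions (Lyapunov near $P_+$ and shadowing near $\gamma_0$) are reasonably standard, so the genuine technical work of the theorem sits in these two quantitative checks.
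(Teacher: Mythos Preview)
Your treatment of the short-period regime via Lyapunov's centre theorem is essentially the paper's argument: after the linear change $\bar a=(a+b)/\sqrt2$, $\bar b=(a-b)/\sqrt2$ the linearisation at the new equilibrium has eigenvalues $\pm 2^{1/4},\pm i\,2^{1/4}$, and the centre theorem yields the family with period $T_r\to T_0$. The paper does not invoke the reversibility $R$; plain Lyapunov already suffices.

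For the long-period regime your proposal diverges sharply from the paper. You outline an ODE continuation near $\gamma_0$ via a Melnikov-type transversality and Shil'nikov-flavoured shadowing. The paper does none of this: it sets up a \emph{variational} framework on $H^{1}_{per}([-T,T])\times H^{1/2}_{per}([-T,T])$, where the system is the Euler--Lagrange equation of a strongly indefinite functional $E$. A reduction $z\mapsto z^{+}+g(u,z^{+})$ restores mountain-pass geometry, Palais--Smale holds, and the ground-state level $\delta_\varepsilon$ (with $\varepsilon=1/T$) is shown, by rescaling and a concentration-compactness argument together with the classification of Dirac--Einstein bubbles on $\R^3$, to satisfy $\delta_\varepsilon\to\delta_0=E(u_0,z_0)$. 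This forces the minimisers to converge to a translate of $\gamma_0$, and comparison with the energy $\tfrac{1}{4\varepsilon}$ of the constant solution shows they are non-constant for $\varepsilon$ small.

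The practical difference is that the paper needs \emph{no} non-degeneracy of $\gamma_0$: the homoclinic enters only through its ground-state energy, imported from an external classification result. Your route, by contrast, hinges entirely on the Melnikov transversality you do not carry out; and since $P_0$ has purely real eigenvalues $\pm\tfrac12,\pm 1$, the Shil'nikov label is misleading (that theory requires a saddle--focus), so the relevant mechanism would be a reversible-Hamiltonian blue-sky argument whose applicability still rests on the unverified transversality. As written, part~(ii) of your proposal therefore has a genuine gap precisely at the step you describe as ``standard''. Finally, your ``main obstacle'' paragraph misplaces the difficulty: the theorem explicitly allows a gap $[T_1,T_2]$, so neither you nor the paper needs to bridge the two regimes.
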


%\begin{theorem}
%Let $T_0=2^{\frac{3}{4}}\pi$. There exists a sequence of periodic solutions $(u_{T},\psi_{T})$ to $(\ref{equationupsit})$ for $T\in (T_0,\infty)$. Moreover,
%\begin{itemize}
%\item[(i)] there exist $T_{1}<T_{2}$ such that for $T\in \left(T_0,T_{1}\right)\cup \left(T_{2},+\infty\right)$, the family is different from the constant equilibrium solutions;
%\item[(ii)] when $T\to T_0$, $(u_{T},|\psi_{T}|^{2})\to \left(1,\frac{1}{4}\right)$;
%\item[(iii)] when $T\to \infty$, there exists $t_{0}$ and $\lambda>0$ such that
%             $$u_{T}\to U_{\lambda}(\cdot-t_{0}), \quad \psi_{T}\to \Psi_{\lambda}(\cdot-t_{0}),$$
%             where $(U,\Psi)$ is solution corresponding to the spherical solution of the problem (\ref{eqRtimesS2}).
%\end{itemize}
%\end{theorem}

\noindent
In terms of singular solutions of the conformal Dirac-Einstein equation (\ref{eqR3singular}) on $\R^3\setminus \{0\}$, which is equivalent to the problem (\ref{eqspheresingular}) on $\S^{3}\setminus \Lambda$, we obtain
\begin{corollary}\label{Corollary1}
For $T>T_{2}>0$, there exist $\lambda>0$, $\Phi_{0}\in \S_{\C}^{2}$ and a one parameter family $(u_{T},\psi_{T})$ of singular solutions of problem (\ref{eqR3singular}) such that, when $T\to \infty$
$$\left( u_{T}, \psi_{T} \right) \to \left( U_{\lambda}, \Psi_{\lambda} \right), \;  \text{ in  } C^{2,\alpha}_{loc}(\R^{3}\setminus \{0\}) \times C^{1,\beta}_{loc}(\Sigma\R^{3}\setminus \{0\}), \; 0 < \alpha, \beta < 1 ,$$
where $\big(U_{\lambda}(x),\Psi_{\lambda}(x)\big):=\left(\Big(\frac{2\lambda}{\lambda^{2}+|x|^{2}}\Big)^{\frac{1}{2}}, \Big(\frac{2\lambda}{\lambda^{2}+|x|^{2}}\Big)^{\frac{3}{2}}(1- x)\cdot\Phi_{0}\right)$ .
\end{corollary}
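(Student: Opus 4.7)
The plan is to reverse the two reductions carried out in the introduction. Given a non-constant $2T$-periodic solution $(u_T,v_T,a_T,b_T)$ of \eqref{ham} produced by Theorem~\ref{maintheorem}, fix a unit Killing spinor $\varphi$ on $\S^2$ with $D_{d\theta^{2}}\varphi=\varphi$ and $|\varphi|\equiv 1$. Set $w_T(t,\theta)=u_T(t)$ and
$$\phi_T(t,\theta)=\bigl((a_T(t)+ib_T(t))\oplus(a_T(t)-ib_T(t))\bigr)\otimes\varphi(\theta).$$
Reversing the derivation that leads to \eqref{equationupsit}, the pair $(w_T,\phi_T)$ solves \eqref{eqRtimesS2} on $\R\times\S^2$. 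Because $L_g$ and $D_g$ are conformally covariant, with scalar weight $(n-2)/2=1/2$ and spinorial weight $(n-1)/2=1$ in dimension $n=3$, the conformal diffeomorphism $(t,\theta)\mapsto e^{-t}\theta$ from $\R\times\S^2$ onto $\R^3\setminus\{0\}$ converts $(w_T,\phi_T)$ into
$$u_T(x)=|x|^{-1/2}w_T(-\ln|x|,x/|x|),\qquad \psi_T(x)=|x|^{-1}\phi_T(-\ln|x|,x/|x|),$$
read through the canonical identification of the spin bundles. A direct check shows that $(u_T,\psi_T)$ solves \eqref{eqR3singular} in the distributional sense. Non-constancy together with $2T$-periodicity in $t$ prevents extension across $0$ or $\infty$, yielding the two antipodal singularities on $\S^3$.

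For the limit $T\to\infty$, apply Theorem~\ref{maintheorem}(ii) to obtain $t_0\in\R$ with $(u_T,v_T,a_T,b_T)\to(u_0(\cdot-t_0),u_0'(\cdot-t_0),a_0(\cdot-t_0),b_0(\cdot-t_0))$ in $C^{2,\alpha}_{loc}\times C^{1,\beta}_{loc}$. Since the two changes of variables are smooth diffeomorphisms onto $\R^3\setminus\{0\}$, the convergence transfers to $(u_T,\psi_T)$ in $C^{2,\alpha}_{loc}(\R^3\setminus\{0\})\times C^{1,\beta}_{loc}(\Sigma\R^3\setminus\{0\})$. Setting $\lambda:=e^{-t_0}$ and using the identity
$$\cosh(-\ln|x|-t_0)=\frac{\lambda^2+|x|^2}{2\lambda|x|},$$
the substitution $t=-\ln|x|-t_0$ in the explicit formula for $u_0$ combined with the conformal factor $|x|^{-1/2}$ reproduces $U_\lambda(x)$. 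The analogous substitution in $a_0,b_0$, combined with the radial spinor ansatz $\psi\in E(\R^3)$ in which $f_1(r)=-a(t)e^t$ and $f_2(r)=b(t)e^t$, reproduces $\Psi_\lambda$, with $\Phi_0\in\S^2_\C$ determined by the value of the Killing spinor $\varphi$ at a chosen base point of $\S^2$.

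The main subtlety is the reconciliation of the two descriptions of the spinor given in the introduction: the geometric one on $\R\times\S^2$ using the splitting $\Sigma_{g_{prod}}(\R\times\S^2)=(\Sigma_{dt^{2}}\R\oplus\Sigma_{dt^{2}}\R)\otimes\Sigma_{d\theta^{2}}\S^2$ and the Killing spinor $\varphi$, and the analytic one on $\R^3\setminus\{0\}$ through the radial class $E(\R^3)$ with Clifford direction $\gamma_0$. One must verify that these two presentations are intertwined by the conformal identification and the spinor conformal weight, so that the Clifford factor $(1-x)\cdot\Phi_0$ appearing in $\Psi_\lambda$ arises precisely from the combination of the two exponentials $e^{\pm t/2}$ in $a_0(t)$ and $b_0(t)$. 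Once this identification is fixed, the remaining computations are routine and the corollary follows from Theorem~\ref{maintheorem}(ii).
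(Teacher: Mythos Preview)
Your argument is correct and follows exactly the route the paper takes: the paper's proof consists of a single remark that Corollary~\ref{Corollary1} ``follows after a change in cylindrical coordinates, which allows to pass from (\ref{UPSI}) to (\ref{UPSIR3}),'' and you have simply spelled out that change of coordinates and the accompanying conformal-weight bookkeeping in detail. Your identification $\lambda=e^{-t_0}$ and the $\cosh$ identity are the concrete content behind that sentence, and your caveat about reconciling the $\R\times\S^2$ and $E(\R^3)$ descriptions of the spinor is a fair point that the paper leaves implicit.
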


\section{Geometric and Analytical Settings}

\noindent
Here we briefly recall some notations and properties of the relevant operators that we are going to use. On a general compact, without boundary, three dimensional Riemannian manifold $(M,g)$, we consider the conformal Laplacian acting on functions by
$$L_g u:=-\Delta_g u+\frac{1}{8}R_g u ,$$
where $\Delta_g$ is the standard Laplace-Beltrami operator and $R_g$ is the scalar curvature. Then the conformal invariance of $L_g$ means that if $\tilde g=g_u= u^4g$ is a metric in the conformal class of $g$, then we have $L_{\tilde g}f=u^{-5}L_g(uf) .$ We will denote by $H^1(M)$ the usual Sobolev space on $M$, and we recall that by the Sobolev embedding theorems there is a continuous embedding $$H^1(M)\hookrightarrow L^p(M), \quad 1\leq p \leq 6 ,$$
which is compact if $1\leq p <6$.\\
Regarding the spinorial part, we denote by $\Sigma M$ the canonical spinor bundle associated to $M$ (see for instance \cite{Fred}), whose sections are called spinors. On this bundle one defines a natural Clifford multiplication
$$\text{Cliff}:C^\infty(TM\otimes\Sigma M)\longrightarrow C^\infty(\Sigma M), $$
a hermitian metric $\langle \cdot, \cdot \rangle$,  and a natural metric connection
$$\nabla^\Sigma:C^\infty(\Sigma M)\longrightarrow C^\infty(T^*M\otimes\Sigma M) .$$
Therefore the Dirac operator $D_g$ acting on spinors is given by the composition
$$D_g:C^\infty(\Sigma M)\longrightarrow C^\infty(\Sigma M), \quad D_g=\text{Cliff} \circ \nabla^\Sigma ,$$
where $T^*M \simeq TM$ are identified by means of the metric; the conformal invariance for the Dirac operator reads as follows: if $\tilde g= u^4g$, then $D_{\tilde g}\psi=u^{-4}D_g(u^2\psi) .$

\noindent
After the reduction defined introduced in $(\ref{systemab})$, it is natural to introduce the operator $A$ defined by 
\begin{equation}\label{operatorA}
Az=-Jz'+JBz,
\end{equation}
for any smooth complex valued 2T-periodic function 
$$z(t)=a(t) + i b(t) \simeq \left(\begin{array}{cc}
a(t)\\
b(t)
\end{array}
\right), $$
where
$$
J=\left[\begin{array}{cc}
0&1\\
-1&0
\end{array}
\right], \qquad
B=\left[\begin{array}{cc}
-1&0\\
0&1
\end{array}
\right]. $$
The natural domain for $A$ is $H^{\frac{1}{2}}_{per}(T) := H^{\frac{1}{2}}_{per}([-T,T]; \C)$. Moreover, since $JB=-BJ$, it is clear that $A: H^{\frac{1}{2}}_{per}(T)\to H^{-\frac{1}{2}}(T)$ is self-adjoint in $L^{2}$. Moreover, since  $A^{2}z=-z''+z$, $A$ has a trivial kernel. The operator $A$ has a compact resolvent and there exists a complete $L^{2}$-orthonormal basis of eigenfunctions $\{z_i\}_{i\in\mathbb{Z}}$ satisfying
$$A z_i=\lambda_i z_i ,$$
where the eigenvalues $\{\lambda_i\}_{i\in\mathbb{Z}}$ are unbounded, that is $|\lambda_i|\rightarrow\infty$, as $|i|\rightarrow\infty$. We refer the reader to \cite{SX2020} for more properties of operators of the same type as $A$.
For a given $z\in H^{\frac{1}{2}}_{per}(t)$, written as $z=\sum_{i\in \mathbb{Z}}\alpha_{i}z_{i}$, we define the operator
$$|A|^{s}: H_{per}^{\frac{1}{2}}(T)\rightarrow L^{2}([-T,T]) , \quad |A|^{s}(z)=\sum_{i\in \mathbb{Z}} \alpha_{i}|\lambda_{i}|^{s}z_{i}.$$
We can therefore define the inner product
$$\langle f,g\rangle_{s}=\langle|A|^{s}f,|A|^{s}g\rangle_{L^{2}},$$
which induces an equivalent norm in $H^{s}_{per}([-T,T],\C)$; in particular, for $s=\frac{1}{2}$, we will consider
$$\langle z,z\rangle_{\frac{1}{2}}=\|z\|^{2}_{\frac{1}{2}}.$$
That is, 
$$\|z\|^2_{\frac{1}{2}} = \|z\|^2_{H^{\frac{1}{2}}}=  \int_{-T}^{T} \left||A|^{\frac{1}{2}}z\right|^2 dt.$$
Now, given the $L^2$-orthonormal basis of eigenfunctions $\{z_i\}_{i\in\mathbb{Z}}$, we will denote by $z_i^-$ the eigenspinors with negative eigenvalue, $z_i^+$ the eigenspinors with positive eigenvalue. Therefore we set
$$H^{\frac{1}{2},-}:=\overline{\text{span}\{z_i^-\}_{i\in\mathbb{Z}}},\quad
H^{\frac{1}{2},+}:=\overline{\text{span}\{z_i^+\}_{i\in\mathbb{Z}}},$$
where the closure is with respect to the topology induced by the previous norm, and we have the splitting:
\begin{equation}\label{splitting}
H^{\frac{1}{2}}_{per}(T)=H^{\frac{1}{2},-}\oplus  H^{\frac{1}{2},+},
\end{equation}
and we will denote by $P^{+}$ and $P^{-}$ be the projectors on $H^{\frac{1}{2},+}$ and $H^{\frac{1}{2},-}$ respectively. We will also use the notation $z^{\pm}:=P^{\pm}z$ for all $z\in H_{per}^{\frac{1}{2}}(T)$. Finally, we recall a regularity results for weak solutions, (see \cite{MM2019}, Theorem 3.1): if $(u,\psi)\in H^1(M) \times H^{\frac{1}{2}}(\Sigma M)$ is a weak solution of the system of equations (\ref{eq1}), on a closed three dimensional spin manifold $(M,g, \Sigma M)$, then $(u,\psi)\in C^{2,\alpha}(M) \times C^{1,\beta}(\Sigma M)$, for some $0<\alpha,\beta<1$.

%%%%%%%%%%%%%%%%%%%%%%%%%%%%%%%%%%%%%%%%%%%%%%%%%%%%%%%%%%%%%%%%%%%%%%%%%%%%%%%%%%%%%%%%%%%%%%%%%%%%%%%%%%%%%%%%%%%%%%%%%%%%%%%%%%%%
%%%%%%%%%%%%%%%%%%%%%%%%%%%%%%%%%%%%%%%%%%%%%%%%%%%%%%%%%%%%%%%%%%%%%%%%%%%%%%%%%%%%%%%%%%%%%%%%%%%%%%%%%%%%%%%%%%%%%%%%%%%%%%%%%%%%
%%%%%%%%%%%%%%%%%%%%%%%%%%%%%%%%%%%%%%%%%%%%%%%%%%%%%%%%%%%%%%%%%%%%%%%%%%%%%%%%%%%%%%%%%%%%%%%%%%%%%%%%%%%%%%%%%%%%%%%%%%%%%%%%%%%%
%%%%%%%%%%%%%%%%%%%%%%%%%%%%%%%%%%%%%%%%%%%%%%%%%%%%%%%%%%%%%%%%%%%%%%%%%%%%%%%%%%%%%%%%%%%%%%%%%%%%%%%%%%%%%%%%%%%%%%%%%%%%%%%%%%%%

\section{Proof of the Main Theorem}

\noindent
In this section we will prove the Theorem \ref{maintheorem}. We will start by proving the existence of periodic orbits of (\ref{ham}) near the equilibrium points and then we will address the existence of periodic orbits with large periods.

%%%%%%%%%%%%%%%%%%%%%%%%%%%%%%%%%%%%%%%%%%%%%%%%%%%%%%%%%%%%%%%%%%%%%%%%%%%%%%%%%%%%%%%%%%%%%%%%%%%%%%%%%%%%%%%%%%%%%%%%%%%%%%%%%%%%
%%%%%%%%%%%%%%%%%%%%%%%%%%%%%%%%%%%%%%%%%%%%%%%%%%%%%%%%%%%%%%%%%%%%%%%%%%%%%%%%%%%%%%%%%%%%%%%%%%%%%%%%%%%%%%%%%%%%%%%%%%%%%%%%%%%%
%%%%%%%%%%%%%%%%%%%%%%%%%%%%%%%%%%%%%%%%%%%%%%%%%%%%%%%%%%%%%%%%%%%%%%%%%%%%%%%%%%%%%%%%%%%%%%%%%%%%%%%%%%%%%%%%%%%%%%%%%%%%%%%%%%%%
%%%%%%%%%%%%%%%%%%%%%%%%%%%%%%%%%%%%%%%%%%%%%%%%%%%%%%%%%%%%%%%%%%%%%%%%%%%%%%%%%%%%%%%%%%%%%%%%%%%%%%%%%%%%%%%%%%%%%%%%%%%%%%%%%%%%

\subsection{Small Oscillation and Periodic Orbits}

\noindent
In order to show the existence of periodic orbits near the equilibrium points $P_{\pm}$, we will use the Hamiltonian structure of $(\ref{ham})$ after rewriting it. First of all, we transform the system (\ref{ham}) using the auxiliary variables $\bar{a}=\frac{a+b}{\sqrt{2}}$ and $\bar{b}=\frac{a-b}{\sqrt{2}}$. So the system becomes:
\begin{equation}\label{hamnew}
\left\{\begin{array}{ll}
u'=v\\
v'=\left(\frac{1}{4}-(\bar{a}^{2}+\bar{b}^{2})\right)u\\
\bar{a}'=-(1+u^{2}) \bar{b} \\
\bar{b}' =(u^{2}-1) \bar{a}
\end{array}
\right.
\end{equation}

\noindent
The new Hamiltonian is then
$$\bar{H}(u,v,\bar{a},\bar{b})=\frac{1}{2}v^{2}+\frac{1}{2}(u^{2}-1)\left(\bar{a}^{2}+\bar{b}^{2}-\frac{1}{4}\right)+\frac{1}{2}\left(2\bar{b}^{2}-\frac{1}{4}\right).$$
The equilibrium points are $(0,0,0,0)$ and $(1,0,\pm\frac{1}{2},0)$. The linearization of the right hand side of (\ref{hamnew}) at $(1,0,\frac{1}{2},0)$ leads to the following matrix
$$C=\left[\begin{array}{cccc}
0&1&0&0\\
0&0&-1&0\\
0&0&0&-2\\
1&0&0&0
\end{array}
\right].$$

\noindent
In particular, $C$ has two real eigenvalues $\pm 2^{\frac{1}{4}}$ and two complex eigenvalues $\pm i2^{\frac{1}{4}}$. Therefore, one can apply the Lyapunov's center theorem, in order to exhibit the existence of a positive $\delta$ and a family $(x_{r})_{r\in (-\delta,\delta)}:=(u_{T_{r}},v_{T_{r}},\overline{a}_{T_{r}},\overline{b}_{T_{r}})$ of periodic solutions with a period $T_{r}$, starting from the equilibrium point $(1,0,\frac{1}{2},0)$. Moreover, the period $T_{r}$ converges to $T_{0}=\frac{2\pi}{2^{\frac{1}{4}}}$, when $r\to 0$; which proves the first part of Theorem \ref{maintheorem}.

%%%%%%%%%%%%%%%%%%%%%%%%%%%%%%%%%%%%%%%%%%%%%%%%%%%%%%%%%%%%%%%%%%%%%%%%%%%%%%%%%%%%%%%%%%%%%%%%%%%%%%%%%%%%%%%%%%%%%%%%%%%%%%%%%%%%
%%%%%%%%%%%%%%%%%%%%%%%%%%%%%%%%%%%%%%%%%%%%%%%%%%%%%%%%%%%%%%%%%%%%%%%%%%%%%%%%%%%%%%%%%%%%%%%%%%%%%%%%%%%%%%%%%%%%%%%%%%%%%%%%%%%%
%%%%%%%%%%%%%%%%%%%%%%%%%%%%%%%%%%%%%%%%%%%%%%%%%%%%%%%%%%%%%%%%%%%%%%%%%%%%%%%%%%%%%%%%%%%%%%%%%%%%%%%%%%%%%%%%%%%%%%%%%%%%%%%%%%%%
%%%%%%%%%%%%%%%%%%%%%%%%%%%%%%%%%%%%%%%%%%%%%%%%%%%%%%%%%%%%%%%%%%%%%%%%%%%%%%%%%%%%%%%%%%%%%%%%%%%%%%%%%%%%%%%%%%%%%%%%%%%%%%%%%%%%

\subsection{Solutions with Large Period}

We focus now on proving the existence of $2T$-periodic solutions with $T$ large; we will also show that these periodic solutions are different from the constant solution, proving the second part of the main theorem. The strategy here is different from the one used above. We will use a variational framework, keeping the equation on the scalar part $u$, while using the Hamiltonian structure of the second two equations in (\ref{systemab}).

\noindent
We will set $H^{1}_{per}(T)=H^{1}_{per}([-T,T]; \R)$, the Sobolev space of $2T$-periodic functions endowed with the equivalent norm
$$\|u\|_{1}^2 = \|u\|^2_{H^{1}}=\int_{-T}^{T}|u'|^{2}+\frac{1}{4}u^{2} dt . $$
We also denote by 
$$\|(u,z) \|^2=\|u\|_{1}^2 + \|z\|^2_{\frac{1}{2}}, \quad (u,z)\in H^{1}_{per}(T)\times H^{\frac{1}{2}}_{per}(T) $$
Now let us consider the functional
$$E:H^{1}_{per}(T)\times H^{\frac{1}{2}}_{per}(T)\rightarrow \R, $$
$$E(u,z)=\frac{1}{2}\int_{-T}^{T}|u'|^{2}+\frac{1}{4}u^{2}+\langle Az,z\rangle\ dt-\frac{1}{2}\int_{-T}^{T} u^{2}|z|^{2} \ dt,$$
%In terms of norms, we have
%$$E(u,z)=\frac{1}{2}\left(\|u\|^{2}-\|z^-\|^{2}+\|z^+\|^{2}-\int_{-T}^{T} u^{2}|z|^{2}  dt\right).$$
where $A$ is the operator defined in \ref{operatorA}. A direct computation shows that critical points of $E$ solve the system (\ref{systemab}). Hence, to prove our result, we need to show the existence of critical points of the functional $E$. We start by showing a compactness property of our functional.

\begin{lemma}\label{EP-S}
The functional $E$ satisfies the Palais-Smale condition (PS).
\end{lemma}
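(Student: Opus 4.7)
The plan is to handle a Palais--Smale sequence $(u_n, z_n)\subset H^{1}_{per}(T)\times H^{\frac{1}{2}}_{per}(T)$ in two stages: first to prove that it is bounded in $H^1\times H^{\frac{1}{2}}$, and then to extract a strongly convergent subsequence using the compactness of the one-dimensional Sobolev embeddings together with the invertibility of the two linear operators involved.

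For the boundedness step, I would set $M_n := \|(u_n, z_n)\|$ and exploit the following chain of identities. Testing $E'(u_n, z_n)$ against $(u_n, 0)$ yields $\|u_n\|_1^2 = \int u_n^2|z_n|^2\,dt + o(\|u_n\|_1)$, while the combination $2E(u_n, z_n) - E'(u_n, z_n)[(u_n, z_n)] = \int u_n^2|z_n|^2\,dt$ shows that $\int u_n^2|z_n|^2\,dt = O(1) + o(M_n)$, so that $\|u_n\|_1^2 \leq C + o(M_n)$. To bound the $z$-component, the splitting $z_n = z_n^+ + z_n^-$ becomes essential: testing $E'(u_n, z_n)$ against $(0, z_n^{\pm})$ and using that the $A$-cross terms $\int \langle A z_n^{\mp}, z_n^{\pm}\rangle\,dt$ vanish by $L^2$-orthogonality of the eigenspaces gives $\|z_n^{\pm}\|_{\frac{1}{2}}^2 = \int u_n^2\langle z_n, z_n^{\pm}\rangle\,dt + o(\|z_n^{\pm}\|_{\frac{1}{2}})$. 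A Cauchy--Schwarz factorization $\int u_n^2|z_n||z_n^{\pm}|\leq\bigl(\int u_n^2|z_n|^2\bigr)^{1/2}\bigl(\int u_n^2|z_n^{\pm}|^2\bigr)^{1/2}$, together with the one-dimensional embeddings $H^{1}_{per}(T)\hookrightarrow L^\infty$ and $H^{\frac{1}{2}}_{per}(T)\hookrightarrow L^2$, leads to $\|z_n^{\pm}\|_{\frac{1}{2}}\leq C\|u_n\|_1^2 + o(1)$. Combined with the previous bound on $\|u_n\|_1$, this closes into an inequality of the shape $M_n^2 \leq C + \delta_n M_n^2$ with $\delta_n\to 0$, forcing $M_n$ to be bounded.

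Once boundedness is in hand, I pass to a subsequence $(u_n, z_n)\rightharpoonup (u_\infty, z_\infty)$ weakly in $H^1\times H^{\frac{1}{2}}$. By the compact embeddings available on the bounded interval $[-T,T]$, $u_n\to u_\infty$ in $L^\infty$ and $z_n\to z_\infty$ in every $L^p$ with $p<\infty$, so that the nonlinear quantities $u_n|z_n|^2$ and $u_n^2 z_n$ converge strongly in $L^2$. The operator $L := -\partial_t^2 + \frac{1}{4}$ is a positive isomorphism from $H^{1}_{per}(T)$ onto $H^{-1}(T)$, while $A$ has trivial kernel (by the identity $A^2 z = -z'' + z$ recalled above) and therefore furnishes an isomorphism between $H^{\frac{1}{2}}_{per}(T)$ and its dual. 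Inverting the Euler--Lagrange equations
\[
L u_n = u_n|z_n|^2 + \varepsilon_n^{(1)}, \qquad A z_n = u_n^2 z_n + \varepsilon_n^{(2)},
\]
with $\varepsilon_n^{(i)}\to 0$ in the corresponding dual norms, then yields $u_n\to u_\infty$ in $H^1$ and $z_n\to z_\infty$ in $H^{\frac{1}{2}}$, completing the proof.

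The main obstacle is the boundedness estimate in the first stage. Because the quadratic form $\int\langle Az, z\rangle\,dt$ is strongly indefinite, there is no direct coercivity in the $z$-variable, and one has to transfer control from the coupled integral $\int u^2|z|^2\,dt$ (which is tied to $\|u\|_1^2$ by the first Euler--Lagrange equation) back to the $H^{\frac{1}{2}}$-norms of the two components $z^\pm$. This transfer goes through only because the nonlinearity is subcritical on the compact one-dimensional domain, where $H^1\hookrightarrow L^\infty$ and $H^{\frac{1}{2}}\hookrightarrow L^p$ for every finite $p$.
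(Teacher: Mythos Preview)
Your argument is correct and follows essentially the same route as the paper's proof: both establish boundedness by combining the identity $2E-E'[(u,z)]=\int u^{2}|z|^{2}$ with the tests against $(u_n,0)$ and $(0,z_n^{\pm})$, then use Cauchy--Schwarz plus one-dimensional Sobolev embeddings (the paper uses $L^{4}$--$L^{4}$ where you use $L^{\infty}$--$L^{2}$, which is equally valid here) to close the loop, and finally upgrade weak to strong convergence via compact embeddings and invertibility of the linear operators. The only slip is a missing sign when testing against $z_n^{-}$ (one gets $-\|z_n^{-}\|_{1/2}^{2}$ on the left), but since you immediately pass to absolute values this is harmless.
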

\begin{proof}
The idea is very similar to the proof of the (PS) condition for the Dirac-Einstein equation as in \cite{MM2019}. So we consider a (PS) sequence $(u_{n},z_{n})\in H^{1}_{per}(T) \times H^{\frac{1}{2}}_{per}(T)$ at the level $c\in \R$. Therefore,
\begin{equation}\label{pse}
\int_{-T}^{T}|u'_{n}|^{2}+\frac{1}{4}u_{n}^{2}\ dt+\langle Az_{n},z_{n}\rangle dt-\int_{-T}^{T} u_{n}^{2}|z_{n}|^{2}\ dt\to 2c
\end{equation}
and
\begin{equation}\label{pss}
\left\{\begin{array}{ll}
-u''_{n}+\frac{1}{4}u_{n}=u_{n}|z_{n}|^{2}+o_{H^{-1}}(1)\\
\\
Az_{n}=u_{n}^{2}z_{n}+o_{H^{-\frac{1}{2}}}(1)
\end{array}
\right.
\end{equation}
Multiplying the first equation of $(\ref{pss})$ by $u_{n}$ and the second equation of $(\ref{pss})$ by $z_{n}$ and substituting it in $(\ref{pse})$ we have
$$\|u_{n}\|_{1}^{2}=2c+o(\|z_{n}\|_{\frac{1}{2}})$$
and
$$\int_{[-T,T]}u_{n}^{2}|z_{n}|^{2}\ dt = 2c+o(\|u_{n}\|_{1}+\|z_{n}\|_{\frac{1}{2}}).$$

\noindent
Moreover, we have that
\begin{align}
\|z_{n}^{+}\|^{2}_{\frac{1}{2}}&=\int_{[-T,T]}u_{n}^{2}\langle z_{n},z_{n}^{+}\rangle\ dt +o(\|z_{n}\|_{\frac{1}{2}})\notag\\
&\leq \left(\int_{[-T,T]}u_{n}^{2}|z_{n}|^{2} dt \right)^{\frac{1}{2}}\left(\int_{[-T,T]}u_{n}^{2}|z_{n}^{+}|^{2}dt\right)^{\frac{1}{2}}+o(\|z_{n}\|)\\
&\leq \big(2c+o(\|u_{n}\|_{1}+\|z_{n}\|_{\frac{1}{2}})\big)^{\frac{1}{2}}\|u_{n}\|_{L^{4}}\|z_{n}^{+}\|_{L^{4}}+o(\|z_{n}\|_{\frac{1}{2}})\notag\\
&\leq C\big(2c+o(\|u_{n}\|_{1}+\|z_{n}\|_{\frac{1}{2}})\big)\|z_{n}^{+}\|_{\frac{1}{2}} +o(\|z_{n}\|_{\frac{1}{2}}) . \notag
\end{align}
Similarly,
$$\|z_{n}^{-}\|^{2}_{\frac{1}{2}} \leq C\Big(2c+o(\|u_{n}\|_{1}+\|z_{n}\|_{\frac{1}{2}})\Big)\|z_{n}^{+}\|_{\frac{1}{2}}+o(\|z_{n}\|_{\frac{1}{2}}) . $$
Thus, $\|z_{n}\|_{\frac{1}{2}}$  and $\|u_{n}\|_{1}$ are bounded. So up to a subsequence, $u_{n}\rightharpoonup u_{\infty}$ weakly in $H^{1}_{per}(T)$ and strongly in $ C^{0,\frac{1}{2}}([-T,T])$. Moreover, $z_{n}\rightharpoonup z_{\infty}$ weakly in $H^{\frac{1}{2}}_{per}(T)$ and strongly in $L^{p}([-T,T])$ for all $1\leq p<\infty$. Hence, in order to finish the proof, we notice that
$$\|u_{n}\|_{1}^{2}=\int_{[-T,T]}u_{n}^{2}|z_{n}|^{2}\ dt+o(1).$$
But since $u_{n}\to u_{\infty}$ in $L^{\infty}([-T,T])$ and $z_{n}\to z_{\infty}$ in $L^{2}([-T,T])$, we see that $(u_{n})$ converges strongly to $u_{\infty}$ in $H^{1}_{per}(T)$ and a similar argument works for $(z_{n})$ which finishes the proof of the (PS) condition for $E$.
\end{proof}

%The energy functional $E$ is well defined on $H^{1}_{per}([-T,T])\times H^{\frac{1}{2}}_{per}([-T,T])$.

\noindent
Our next step now is to show that $E$ has a mountain-pass geometry around zero, but this requires a reduction that compensates the strongly indefinite aspect of the functional. We start by the following 
\begin{proposition}\label{existenc of g}
Let us consider the natural splitting $H^{\frac{1}{2}}_{per}(T)=H^{\frac{1}{2},-} \oplus H^{\frac{1}{2},+}$ (as in \ref{splitting}). Then there exists a functional $g:H^{1}_{per}(T)\times H^{\frac{1}{2},+}\to H^{\frac{1}{2},-}$ satisfying, for $v\in H^{\frac{1}{2},+}$
\begin{equation}\label{eqe}
E(u,v+w)<E(u,v+g(u,v)), \text{ for all } w\in H^{\frac{1}{2},-}, w\not= g(u,v).
\end{equation}
\end{proposition}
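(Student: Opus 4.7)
The plan is to carry out a saddle-point (Amann-type) reduction: for each fixed $u\in H^{1}_{per}(T)$ and $v\in H^{\frac{1}{2},+}$, I would show that
$$F_{u,v}:H^{\frac{1}{2},-}\to\R,\qquad F_{u,v}(w):=E(u,v+w),$$
attains a unique maximum, and then define $g(u,v)$ to be that maximizer. The strict inequality (\ref{eqe}) is then an immediate consequence of strict concavity together with uniqueness.

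First I would exploit that the splitting $H^{\frac{1}{2},+}\oplus H^{\frac{1}{2},-}$ diagonalizes $A$: decomposing $v$ and $w$ in the eigenbasis $\{z_i\}$ and using the $L^{2}$-orthonormality of the $z_i$, one gets $\langle Av,w\rangle=0$ and $\langle Az_i,z_i\rangle=\lambda_i=\mathrm{sign}(\lambda_i)\,\||A|^{\frac{1}{2}}z_i\|^{2}_{L^{2}}$, so that
$$\langle A(v+w),v+w\rangle=\|v\|^{2}_{\frac{1}{2}}-\|w\|^{2}_{\frac{1}{2}}.$$
Consequently, with a constant $C(u,v)$ that does not depend on $w$,
$$F_{u,v}(w)=C(u,v)-\frac{1}{2}\|w\|^{2}_{\frac{1}{2}}-\frac{1}{2}\int_{-T}^{T}u^{2}|v+w|^{2}\,dt.$$

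Next I would check three structural properties of $F_{u,v}$. Strict concavity is read off the second variation
$$D^{2}F_{u,v}(w)[\eta,\eta]=-\|\eta\|^{2}_{\frac{1}{2}}-\int_{-T}^{T}u^{2}|\eta|^{2}\,dt<0,\qquad \eta\in H^{\frac{1}{2},-}\setminus\{0\}.$$
Coercivity $F_{u,v}(w)\to-\infty$ as $\|w\|_{\frac{1}{2}}\to\infty$ is immediate, since the nonlinear term is non-positive and the leading behaviour is $-\tfrac{1}{2}\|w\|^{2}_{\frac{1}{2}}$. Sequential weak upper semicontinuity follows by combining the weak lower semicontinuity of $\|\cdot\|^{2}_{\frac{1}{2}}$ with the one-dimensional compact embedding $H^{\frac{1}{2}}_{per}(T)\hookrightarrow L^{p}([-T,T])$ for every $p<\infty$ and the bound $u\in L^{\infty}([-T,T])$ coming from $H^{1}_{per}(T)\hookrightarrow C^{0,\frac{1}{2}}$; together these render $w\mapsto\int u^{2}|v+w|^{2}dt$ weakly continuous. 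Combining coercivity, weak upper semicontinuity, and the reflexivity of $H^{\frac{1}{2},-}$ produces a maximizer; strict concavity forces it to be unique and upgrades the maximality into the strict inequality (\ref{eqe}).

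The only mildly delicate point is the $A$-orthogonality of the splitting, since $A$ itself does not act boundedly from $H^{\frac{1}{2}}_{per}(T)$ to itself but only into $H^{-\frac{1}{2}}_{per}(T)$; interpreting $\langle A\cdot,\cdot\rangle$ throughout as the duality pairing and reducing to the spectral expansion of $A$ in the $L^{2}$-orthonormal basis $\{z_i\}$ makes the identity above routine. Everything else is standard Hilbert-space calculus. If smoothness of $g$ is needed later (e.g.\ in order to differentiate the reduced functional $(u,v)\mapsto E(u,v+g(u,v))$), I would obtain it from the implicit function theorem applied to the first-order condition $D_{w}F_{u,v}(g(u,v))=0$, whose linearization in $w$ is negative definite and hence invertible.
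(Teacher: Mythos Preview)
Your proof is correct and follows essentially the same route as the paper: both expand $E(u,v+w)$ using the $A$-orthogonality of the splitting to isolate the $w$-dependence, observe that the resulting functional on $H^{\frac{1}{2},-}$ is strictly concave and anti-coercive, and conclude the existence of a unique maximizer $g(u,v)$. You supply more detail (the second variation, the weak upper semicontinuity via compact embeddings, the remark on smoothness via the implicit function theorem), but the argument is the same.
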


\begin{proof}
We first notice that
\begin{align}
E(u,v+w)&=\frac{1}{2}\Big(\|u\|_{1}^{2}+\|v\|^{2}_{\frac{1}{2}}-\|w\|^{2}_{\frac{1}{2}}-\int_{-T}^{T} u^{2}|v+w|^{2}\ dt\Big)\notag\\
&=\frac{1}{2}\left(\|u\|_{1}^{2}+\|v\|^{2}_{\frac{1}{2}}-\int_{-T}^{T}u^{2}|v|^{2} dt \right)  + K(w),  \notag
%-\int_{-T}^{T}u^{2}|v|^{2}-\|w\|^{2}-\int_{-T}^{T} u^{2}|w|^{2}\ dt-2\int_{-T}^{T} u^{2}\langle v,w\rangle\ dt\Big) \notag
\end{align}
where $K:H^{\frac{1}{2},-}\to \R$ is defined by
$$K(w)=-\|w\|^{2}_{\frac{1}{2}}-\int_{-T}^{T} u^{2}|w|^{2}\ dt-2\int_{-T}^{T} u^{2}\langle v,w\rangle\ dt$$
is strictly concave and anti-coercive. Therefore, it has a unique maximizer $w_{0}=g(u,v)\in H^{\frac{1}{2},+}$. This maximizer satisfies the equation 
\begin{equation}\label{pro}
Aw_{0}=P^{-}(u^{2}(w_{0}+v)),
\end{equation}
where $P^{-}$ the projector on $H^{\frac{1}{2},-}$. Thus, property $(\ref{eqe})$ is now satisfied. % as claimed in the proposition.
\end{proof}

\begin{lemma}
Let $(u,v) \in H^{1}_{per}(T)\times H^{\frac{1}{2},+}$ and $g$, the functional given by the previous Proposition \ref{existenc of g}. Let us define $F(u,v)=E(u,v+g(u,v))$. Then $F$ has the mountain pass geometry. Namely, we have
\begin{itemize}
\item[(i)] $F(0)=0$
\item[(ii)] There exists $r>0$ such that if $\|u\|_{1}^{2}+\|v\|^{2}_{\frac{1}{2}}\leq r$, then $F(u,v)\geq 0$; in particular if $\|u\|_{1}^{2}+\|v\|^{2}_{\frac{1}{2}}=r$, then $F(u,v)\geq \alpha=\alpha(r)>0$.
\item[(iii)] If $\int_{-T}^{T}u^{2}|v|^{2}\ dt \not=0$, there exist $t, s>0$ large enough, such that $F(tu,sv)<0$.
\item[(iv)] The functional $F$ satisfies the (PS) condition.
\end{itemize}
\end{lemma}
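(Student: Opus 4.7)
The plan is to verify (i)--(iv) in order, using the maximizer property of $g$ as the key tool throughout. For (i), note that at $(u,v)=(0,0)$ the strictly concave functional $K$ whose maximizer defines $g$ reduces to $w\mapsto -\|w\|_{1/2}^{2}$, whose unique maximizer is $w=0$; hence $g(0,0)=0$ and $F(0)=E(0,0)=0$.

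For (ii), the lower bound $F(u,v)=E(u,v+g(u,v))\geq E(u,v)$ (obtained by choosing $w=0$ in the defining inequality for $g$) gives
$$F(u,v)\geq \frac{1}{2}\|u\|_{1}^{2}+\frac{1}{2}\|v\|_{1/2}^{2}-\frac{1}{2}\int_{-T}^{T}u^{2}|v|^{2}\,dt.$$
I would then use the continuous embeddings $H^{1}_{per}(T)\hookrightarrow L^{\infty}$ and $H^{1/2}_{per}(T)\hookrightarrow L^{2}$ to obtain $\int u^{2}|v|^{2}\,dt\leq C\|u\|_{1}^{2}\|v\|_{1/2}^{2}\leq \frac{C}{4}(\|u\|_{1}^{2}+\|v\|_{1/2}^{2})^{2}$. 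Taking $r$ small enough that $1-\frac{Cr}{4}>0$ yields both $F\geq 0$ inside the ball of radius $r$ and $F\geq\alpha(r)>0$ on its boundary sphere.

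Step (iii) is the main obstacle. The approach exploits a scaling identity hidden in the equation $Aw=P^{-}(u^{2}(w+v))$: if $w$ solves it, then $sw$ solves the corresponding equation with $v$ replaced by $sv$, so $g(u,sv)=s\,g(u,v)$ for every $s>0$. Applied to $(tu,sv)$ this yields $g(tu,sv)=s\,h_{t}$ with $h_{t}:=g(tu,v)$, and a direct computation (using the orthogonality $\langle Av,h_{t}\rangle=0$) gives
$$F(tu,sv)=\frac{t^{2}}{2}\|u\|_{1}^{2}+\frac{s^{2}}{2}\big(\|v\|_{1/2}^{2}-\|h_{t}\|_{1/2}^{2}\big)-\frac{t^{2}s^{2}}{2}\int_{-T}^{T}u^{2}|v+h_{t}|^{2}\,dt.$$
The crucial positivity $\int u^{2}|v+h_{t}|^{2}\,dt>0$ for every $t>0$ follows from the hypothesis: otherwise $u(v+h_{t})=0$ a.e., so $Ah_{t}=t^{2}P^{-}(u^{2}(v+h_{t}))=0$, and the triviality of $\ker A$ would force $h_{t}=0$, whence $uv=0$ a.e., contradicting $\int u^{2}|v|^{2}\,dt\neq 0$. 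Combined with the $O(t)$ bound on $\|h_{t}\|_{1/2}$ obtained by pairing the equation of $h_{t}$ with $h_{t}$ itself, this positivity makes the coefficient of $s^{2}$ in the display strictly negative for $t$ sufficiently large; then $F(tu,sv)\to -\infty$ as $s\to\infty$.

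Finally, (iv) is an envelope-theorem reduction to Lemma~\ref{EP-S}. Given a (PS) sequence $(u_{n},v_{n})$ for $F$ at level $c$, set $z_{n}:=v_{n}+g(u_{n},v_{n})$: then $E(u_{n},z_{n})=F(u_{n},v_{n})\to c$; the defining property of $g$ forces $P^{-}\partial_{z}E(u_{n},z_{n})=0$, while the envelope identities $\partial_{u}F=\partial_{u}E$ and $\partial_{v}F=P^{+}\partial_{z}E$ at the reduced point identify the remaining components of $E'(u_{n},z_{n})$ with those of $F'(u_{n},v_{n})$, which tend to zero. Thus $(u_{n},z_{n})$ is a (PS) sequence for $E$, so Lemma~\ref{EP-S} extracts a strongly convergent subsequence; the $P^{+}$-projection then delivers convergence of $(u_{n},v_{n})$ in $H^{1}_{per}(T)\times H^{1/2,+}$.
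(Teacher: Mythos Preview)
Parts (i), (ii) and (iv) are correct and agree with the paper's proof; in (ii) you use $H^{1}\hookrightarrow L^{\infty}$ instead of the paper's $L^{4}$ embeddings, which is a harmless variant, and your envelope argument for (iv) is exactly the paper's.

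For (iii) your homogeneity observation $g(u,sv)=s\,g(u,v)$ is correct and gives the clean decomposition
\[
F(tu,sv)=\tfrac{t^{2}}{2}\|u\|_{1}^{2}+s^{2}\Big(\tfrac12\|v\|_{1/2}^{2}-\tfrac12\|h_{t}\|_{1/2}^{2}-\tfrac{t^{2}}{2}\!\int u^{2}|v+h_{t}|^{2}\Big),
\]
and your proof that $\int u^{2}|v+h_{t}|^{2}>0$ for each fixed $t$ is fine. The gap is the last implication: strict positivity of $\int u^{2}|v+h_{t}|^{2}$ for each $t$, together with $\|h_{t}\|_{1/2}=O(t)$, does \emph{not} force the bracketed coefficient to become negative for large $t$, because $\int u^{2}|v+h_{t}|^{2}$ is allowed to decay as $t\to\infty$; you need $t^{2}\int u^{2}|v+h_{t}|^{2}$ to eventually exceed $\|v\|_{1/2}^{2}$, and nothing you wrote prevents it from staying bounded. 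Concretely, if $\|h_{t}\|_{1/2}$ happens to stay bounded, a subsequence $h_{t_{n}}$ has a weak limit $h_{\infty}\in H^{1/2,-}$ with $P^{-}(u^{2}(v+h_{\infty}))=0$, and one then has to argue that $\int u^{2}|v+h_{\infty}|^{2}>0$; your $\ker A$ trick no longer applies because $h_{\infty}$ satisfies no equation of the form $Ah_{\infty}=\cdots$.

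The paper avoids this by working along the diagonal $s=t$ and splitting into two cases according to whether $\|g(t_{n}u,t_{n}v)\|_{1/2}/t_{n}$ diverges or stays bounded; in the bounded case it passes to the weak limit of the normalized $h_{n}=t_{n}v+g(t_{n}u,t_{n}v)$ and uses the Euler--Lagrange identity \eqref{pro} to show that the limiting integral $\int u^{2}|w|^{2}$ is nonzero. Your approach can be completed along similar lines (or by first showing that either $\|h_{t}\|\to\infty$ or $\int u^{2}|v+h_{t}|^{2}$ stays bounded below), but as written the sentence ``this positivity makes the coefficient of $s^{2}$ strictly negative for $t$ sufficiently large'' is an unproved assertion.
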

\begin{proof}
Regarding $(i)$, we notice that $g(0,0)=0$, hence $F(0)=0$. Next, we notice that
\begin{align}
F(u,v)&\geq \frac{1}{2}\left(\|u\|_{1}^{2}+\|v\|^{2}_{\frac{1}{2}}-\int_{-T}^{T}u^{2}|v|^{2}\ dt\right) \notag\\
&\geq\frac{1}{2}\left(\|u\|_{1}^{2}+\|v\|^{2}_{\frac{1}{2}}-\frac{1}{2}\left(\|u\|_{L^{4}}^{4}+\|v\|_{L^{4}}^{4}\right)\right)\notag\\
&\geq \frac{1}{2}\left(\|u\|_{1}^{2}+\|v\|^{2}_{\frac{1}{2}}-C\left(\|u\|_{1}^{4}+\|v\|^{4}_{\frac{1}{2}}\right)\right)\notag,
\end{align}
where we used in the second inequality the identity $2ab\leq a^{2}+b^{2}$ and in the second inequality, the classical Sobolev embedding. Hence, $(ii)$ is satisfied.\\
Now, we consider $u\in H^{1}_{per}(T)$ and $v\in H^{\frac{1}{2},+}$ such that $u|v|\not=0$ and we fix $\|v\|_{\frac{1}{2}}=1$. Let $t_n$ be an increasing divergent sequence. Then two possible cases can occur: 
$$\text{either } \frac{\|g(t_{n}u,t_{n}v)\|_{\frac{1}{2}}}{t_{n}}\to \infty\quad \text{ or }\quad \frac{\|g(t_{n}u,t_{n}v)\|_{\frac{1}{2}}}{t_{n}}\to a\geq 0.$$
In the first case, we have
\begin{align}
2F(t_{n}u,t_{n}v)&=t_{n}^{2}\|u\|_{1}^{2}+t^{2}_{n}\|v\|^{2}_{\frac{1}{2}}-\|g(t_{n}u,t_{n}v)\|^{2}_{\frac{1}{2}}-  t_{n}^{2}\int_{-T}^{T}u^{2}|t_{n}v+g(t_{n}u,t_{n}v)|^{2}\ dt\notag\\
&\leq t^{2}_{n}\left(\|u\|_{1}^{2}+\|v\|^{2}_{\frac{1}{2}}-\frac{\|g(t_{n}u,t_{n}v)\|^{2}_{\frac{1}{2}}}{t_{n}^{2}} \right)\to -\infty.\notag
\end{align}
In the second case, we let $h_{n}=t_{n}v+g(t_{n}u,t_{n}v)$ and we denote by $w$ the weak limit of $w_{n}=\frac{h_{n}}{\|h_{n}\|_{\frac{1}{2}}}$. We also notice that
$$\langle w_{n},v\rangle_{\frac{1}{2}}=\frac{t_{n}}{\|h_{n}\|_{\frac{1}{2}}}\to (1+a)^{-\frac{1}{2}} .$$
Therefore, $w=(1+a)^{-\frac{1}{2}}v+w^{-}$. Hence, we have
\begin{align}
2F(t_{n}u,t_{n}v)&=t_{n}^{2}\|u\|_{1}^{2}+t_{n}^{2}\|v\|^{2}_{\frac{1}{2}}-\|g(t_{n}u,t_{n}v)\|^{2}_{\frac{1}{2}}-t_{n}^{2}\int_{-T}^{T}u^{2}|h_{n}|^{2}\ dt \\
&=t_{n}^{2}\left(\|u\|_{1}^{2} + \|v\|^{2}_{\frac{1}{2}} -\frac{\|g(t_{n}u,t_{n}v)\|^{2}_{\frac{1}{2}}}{t_{n}^{2}}\right)-t_{n}^{4}\frac{\|h_{n}\|^{2}_{\frac{1}{2}}}{t_{n}^{2}}\int_{-T}^{T}u^{2}|w_{n}|^{2}\ dt.
\end{align}
But, $\int_{-T}^{T}u^{2}|w_{n}|^{2}\ dt \to \int_{-T}^{T}u^{2}|w|^{2}\ dt.$  Thus, in order to conclude, it is enough to show that  $ \int_{-T}^{T}u^{2}|w|^{2}\ dt\not=0$. For this end, we recall that $(\ref{pro})$ yields
$$-\|g(t_{n}u,t_{n}v)\|^{2}_{\frac{1}{2}}=t_{n}^{2}\int_{-T}^{T}u^{2}\langle t_{n}v+g(t_{n}u,t_{n}v),g(t_{n}u,t_{n}v)\rangle \ dt.$$
Hence,
\begin{align}
-\frac{\|g(t_{n}u,t_{n}v)\|^{2}_{\frac{1}{2}}}{t_{n}^{2}}&=t_{n}\|h_{n}\|_{\frac{1}{2}} \int_{-T}^{T}u^{2} \left \langle w_{n},\frac{g(t_{n}u,t_{n}v)}{t_{n}} \right\rangle \ dt\notag\\
&=t_{n}^{2}\frac{\|h_{n}\|_{\frac{1}{2}}}{t_{n}} \int_{-T}^{T}u^{2} \left \langle w_{n},\frac{g(t_{n}u,t_{n}v)}{t_{n}} \right \rangle \ dt.
\end{align}
But, $\frac{\|h_{n}\|_{\frac{1}{2}}}{t_{n}}\to (1+a)^{\frac{1}{2}}\not=0$ and $\frac{\|g(t_{n}u,t_{n}v)\|^{2}_{\frac{1}{2}}}{t_{n}^{2}}\to a^{2}$. Therefore, we see that  
$$\lim_{n\to \infty}\int_{-T}^{T}u^{2}\langle w_{n},\frac{g(t_{n}u,t_{n}v)}{t_{n}}\rangle \ dt=0.$$
On the other hand, $\frac{g(t_{n}u,t_{n}v)}{t_{n}}=\frac{\|h_{n}\|_{\frac{1}{2}}}{t_{n}}w_{n}-v$, converges weakly in $H^{\frac{1}{2}}_{per}(T)$ to $(1+a)^{\frac{1}{2}}w^{-}$ and thus strongly in $L^{2}([-T,T])$.
Therefore,
$$\int_{-T}^{T}u^{2}\langle w,w^{-}\rangle \ dt=0.$$
Hence, if $\int_{-T}^{T}u^{2}|v|^{2}\ dt \not=0$, then $ \int_{-T}^{T}u^{2}|w|^{2}\ dt\not=0$. Hence, $(iii)$ is satisfied.\\
Finally, in order to show that $F$ satisfies the (PS) condition, we first claim that $\|\nabla F(u,v)\|=\|\nabla E (u,v+g(u,v))\|$. Indeed, we recall that
$$\langle \nabla_{z}E(u,v+g(u,v)), w\rangle =0, \forall w\in H^{\frac{1}{2},-}.$$
Hence, for every $h\in H^{1}_{per}(T)$ we have
\begin{align}
\langle \nabla_{u}F(u,v),h\rangle & =\langle \nabla_{u}E(u,v+g(u,v)),h\rangle+\langle \nabla_{z}E(u,v+g(u,v)),\nabla_{u}g(u,v)\cdot h\rangle \\
&=\langle \nabla_{u}E(u,v+g(u,v)),h\rangle .\notag
\end{align}
Similarly, for all $w\in H^{\frac{1}{2},+}$ we have
\begin{align}
\langle \nabla_{v}F(u,v),w\rangle & =\langle \nabla_{z}E(u,v+g(u,v)),w+\nabla_{v}g(u,v)\cdot w\rangle\\
&=\langle \nabla_{z}E(u,v+g(u,v)),w\rangle\notag
\end{align}
and this proves the claim. Now, if $(u_{n},v_{n})$ is a (PS) sequence for $F$, then 
$$(u_{n},v_{n}+g(u_{n},v_{n}))$$
is a (PS) sequence for $E$ and using Lemma \ref{EP-S} we finish the proof of $(iv)$.
\end{proof}

\noindent
Using the mountain pass lemma, we know that $F$ has a critical point. But, as discussed above, we see that $\|\nabla F(u,v)\|=\|\nabla E(u,v+g(u,v))\|$. So the critical points of $F$ correspond to critical points of $E$.\\
One can characterize this critical point as the minimum of $E$ on the generalized Nehari manifold
$$\mathcal{N}=\left\{(u,z)\in H^{1}_{per}(T)\times H^{\frac{1}{2}}_{per}(T)\setminus \{(0,0)\}, \; \textit{satisfying } \; (*)
\right\}.$$
where
$$ (*) \quad
\left\{\begin{array}{ll}
\displaystyle\int_{-T}^{T}|u'|^{2}+\frac{1}{4}u^{2}dt=\int_{-T}^{T}u^{2}|z|^{2}dt;\\
\\
\displaystyle\int_{-T}^{T}\langle Az,z\rangle dt =\int_{-T}^{T}u^{2}|z|^{2}dt\\
\\
\displaystyle P^{-}(Az-u^{2}z)=0
\end{array}
\right.
$$

\noindent
Since we are studying the behavior of such solutions when $T\to \infty$, it is important to investigate the dependence of this critical point on $T$. Therefore, we proceed by rescaling the interval to $[-1,1]$. The new energy functional, then, reads as follow
$$E(u,z)=\frac{T}{2}\left(\int_{-1}^{1}\frac{1}{T^{2}}|u'(s)|^{2}+\frac{1}{4}u^{2}(s)\ ds+\int_{-1}^{1}\langle A_{\frac{1}{T}}z,z\rangle(s) \ ds -\int_{-1}^{1}u^{2}|z|^{2}\ ds\right)$$
where $A_{\frac{1}{T}}z=-\frac{1}{T}Jz'+JBz$. Setting $\varepsilon=\frac{1}{T}$, we define
$$E_{\varepsilon}(u,z)=\frac{1}{2\varepsilon}\left(\int_{-1}^{1}\varepsilon^{2}|u'(s)|^{2}+\frac{1}{4}u^{2}(s)\ ds+\int_{-1}^{1}\langle A_{\varepsilon}z,z\rangle(s) \ ds -\int_{-1}^{1}u^{2}|z|^{2}\ ds\right).$$
The critical points of $E_{\varepsilon}$ correspond to to solutions to the system
\begin{equation}\label{epseq}
\left\{\begin{array}{ll}
-\varepsilon^{2}u''+\frac{1}{4}u=u|z|^{2}\\
& \text{ on } [-1,1]\\
-\varepsilon J z' +JBz=u^{2}z .
\end{array}
\right.
\end{equation}

\noindent
We will also use the following rescaled norms which are adapted to our problem:
$$\|u\|^{2}_{1,\varepsilon}=\frac{1}{\varepsilon}\int_{[-1,1]}\varepsilon^{2}|u'|^{2}+\frac{1}{4}u^{2}\ dt, \quad
\|v\|^{2}_{\frac{1}{2},\varepsilon}=\frac{1}{\varepsilon}\int_{[-1,1]} \left(|A_{\varepsilon}|^{\frac{1}{2}}|z|\right)^{2}\ dt ,$$
$$\|(u,z) \|^2_{\varepsilon}=\|u\|^2_{1, \varepsilon} + \|z\|^2_{\frac{1}{2},\varepsilon},$$
and finally
$\|u\|_{L^{p},\varepsilon}^{p}=\displaystyle \frac{1}{\varepsilon}\int_{[-1.1]}|u|^{p}\ dt$, for $1\leq p< \infty$.\\
From now on, we will say that a sequence $(u_{\varepsilon},z_{\varepsilon})\in H^{1}_{per}(1)\times H^{\frac{1}{2}}_{per}(1)$ satisfies property $(\mathcal{A})$, if there exist $0<c_{1}<c_{2}$ such that
$$\qquad\qquad c_{1}\leq E_{\varepsilon}(u_{\varepsilon},z_{\varepsilon})\leq c_{2} \quad \text{ and } \quad \|\nabla E_{\varepsilon}(u_{\varepsilon},z_{\varepsilon})\|_{\varepsilon}\to 0 \qquad\qquad (\mathcal{A})$$

\begin{proposition} \label{propbound}
Let $(u_{\varepsilon},z_{\varepsilon})\in H^{1}_{per}(1)\times H^{\frac{1}{2}}_{per}(1)$ satisfying $(\mathcal{A})$. Then:
\begin{itemize}
\item[(i)] $\|u_{\varepsilon}\|_{1,\varepsilon}$ and $\|z_{\varepsilon}\|_{\frac{1}{2},\varepsilon}$ are bounded.
\item[(ii)] $\|z_{\varepsilon}^{-}-g(u_{\varepsilon},z_{\varepsilon}^{+})\|_{\frac{1}{2},\varepsilon} \to 0$.
\item[(iii)] $\|\nabla F_{\varepsilon}(u_{\varepsilon},z_{\varepsilon}^{+})\|_{\varepsilon}\to 0$.
\end{itemize}
\end{proposition}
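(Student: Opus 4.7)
The plan is to adapt the Palais--Smale argument of Lemma~\ref{EP-S} to the $\varepsilon$-rescaled norms to obtain (i), then to exploit the variational characterization of $g$ from Proposition~\ref{existenc of g} via a direct subtraction argument to obtain (ii), and finally to derive (iii) from (ii) together with the chain-rule identities already established in the (PS) proof for $F$. The key preliminary observation is that the change of variable $s=\varepsilon t$ (with $\varepsilon=1/T$) identifies $\|\cdot\|_{1,\varepsilon}$, $\|\cdot\|_{\frac{1}{2},\varepsilon}$ and $\|\cdot\|_{L^{p},\varepsilon}$ on $[-1,1]$ with the standard $H^{1}$, $H^{\frac{1}{2}}$ and $L^{p}$ norms on $[-T,T]$, so the Sobolev embeddings $H^{1}\hookrightarrow L^{p}$ and $H^{\frac{1}{2}}\hookrightarrow L^{4}$ hold with constants independent of $\varepsilon$; this uniformity underlies every estimate that follows.

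For (i), I would pair $\nabla E_{\varepsilon}(u_{\varepsilon},z_{\varepsilon})$ successively with $u_{\varepsilon}$ and $z_{\varepsilon}$, and combine with the bound $E_{\varepsilon}(u_{\varepsilon},z_{\varepsilon})\le c_{2}$, exactly as in the proof of Lemma~\ref{EP-S}. This yields
$$\|u_{\varepsilon}\|_{1,\varepsilon}^{2}=\tfrac{1}{\varepsilon}\!\!\int_{-1}^{1}\! u_{\varepsilon}^{2}|z_{\varepsilon}|^{2}\,dt+o(\|u_{\varepsilon}\|_{1,\varepsilon}), \qquad \tfrac{1}{\varepsilon}\!\!\int_{-1}^{1}\! u_{\varepsilon}^{2}|z_{\varepsilon}|^{2}\,dt=O(1)+o\bigl(\|u_{\varepsilon}\|_{1,\varepsilon}+\|z_{\varepsilon}\|_{\frac{1}{2},\varepsilon}\bigr).$$
Testing the dual equation $A_{\varepsilon}z_{\varepsilon}=u_{\varepsilon}^{2}z_{\varepsilon}+o(1)$ against $z_{\varepsilon}^{\pm}$, applying Cauchy--Schwarz, and invoking the uniform $L^{4}$-embedding to estimate the resulting trilinear term, one gets a quadratic inequality of the type $\|z_{\varepsilon}^{\pm}\|_{\frac{1}{2},\varepsilon}^{2}\le C\|u_{\varepsilon}\|_{1,\varepsilon}\|z_{\varepsilon}^{\pm}\|_{\frac{1}{2},\varepsilon}+o(\|z_{\varepsilon}^{\pm}\|_{\frac{1}{2},\varepsilon})$; combined with the previous identity for $\|u_{\varepsilon}\|_{1,\varepsilon}^{2}$ this bootstraps to boundedness of both norms.

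For (ii), set $w_{\varepsilon}:=z_{\varepsilon}^{-}-g(u_{\varepsilon},z_{\varepsilon}^{+})\in H^{\frac{1}{2},-}$. The equation for $g$ in Proposition~\ref{existenc of g} reads $A_{\varepsilon}g(u_{\varepsilon},z_{\varepsilon}^{+})=P^{-}\bigl(u_{\varepsilon}^{2}(g(u_{\varepsilon},z_{\varepsilon}^{+})+z_{\varepsilon}^{+})\bigr)$, while the $H^{\frac{1}{2},-}$-projection of the (PS)-type relation $A_{\varepsilon}z_{\varepsilon}=u_{\varepsilon}^{2}z_{\varepsilon}+o(1)$ gives $A_{\varepsilon}z_{\varepsilon}^{-}=P^{-}(u_{\varepsilon}^{2}z_{\varepsilon})+o(1)$ in the dual norm. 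Subtracting yields $A_{\varepsilon}w_{\varepsilon}=P^{-}(u_{\varepsilon}^{2}w_{\varepsilon})+o(1)$. Pairing with $w_{\varepsilon}$ and using that on $H^{\frac{1}{2},-}$ one has $\langle A_{\varepsilon}w_{\varepsilon},w_{\varepsilon}\rangle_{\varepsilon}=-\|w_{\varepsilon}\|_{\frac{1}{2},\varepsilon}^{2}$, one obtains
$$\|w_{\varepsilon}\|_{\frac{1}{2},\varepsilon}^{2}+\tfrac{1}{\varepsilon}\!\!\int_{-1}^{1}\! u_{\varepsilon}^{2}|w_{\varepsilon}|^{2}\,dt=o(\|w_{\varepsilon}\|_{\frac{1}{2},\varepsilon}),$$
which forces $\|w_{\varepsilon}\|_{\frac{1}{2},\varepsilon}\to 0$ since both terms on the left are nonnegative.

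For (iii), I would reuse the chain-rule identities already established in the (PS) lemma for $F$, namely $\nabla_{u}F_{\varepsilon}(u,v)=\nabla_{u}E_{\varepsilon}(u,v+g(u,v))$ and $\nabla_{v}F_{\varepsilon}(u,v)=\nabla_{z}E_{\varepsilon}(u,v+g(u,v))$ restricted to $H^{\frac{1}{2},+}$, both holding because $g(u,v)$ is a maximizer of $K$. Evaluating at $(u_{\varepsilon},z_{\varepsilon}^{+})$ and writing $\nabla E_{\varepsilon}(u_{\varepsilon},z_{\varepsilon}^{+}+g(u_{\varepsilon},z_{\varepsilon}^{+}))=\nabla E_{\varepsilon}(u_{\varepsilon},z_{\varepsilon})+R_{\varepsilon}$, property $(\mathcal{A})$ makes the first term $o(1)$, while a direct expansion shows that $R_{\varepsilon}$ is a multilinear expression in $u_{\varepsilon},z_{\varepsilon}^{\pm},g(u_{\varepsilon},z_{\varepsilon}^{+})$ and $w_{\varepsilon}$ whose dual norm, by H\"older and the uniform $L^{4}$-embedding, is bounded by $C\|w_{\varepsilon}\|_{\frac{1}{2},\varepsilon}$ with $C$ controlled by (i); by (ii) this tends to zero. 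The main technical point throughout is to keep all Sobolev and interpolation constants uniform in $\varepsilon$, which as indicated at the outset is secured by the rescaling identification $s=\varepsilon t$.
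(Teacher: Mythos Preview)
Your proposal is correct and follows essentially the same route as the paper: part (i) is, as in the paper, the $\varepsilon$-rescaled version of the Palais--Smale bound from Lemma~\ref{EP-S}; part (ii) is obtained by the same subtraction of the Euler--Lagrange relation for $g$ from the $H^{\frac{1}{2},-}$-projection of the (PS) relation and pairing with $w_\varepsilon=z_\varepsilon^{-}-g(u_\varepsilon,z_\varepsilon^{+})$; and part (iii) is again the chain-rule reduction $\nabla F_\varepsilon(u_\varepsilon,z_\varepsilon^{+})=\nabla E_\varepsilon(u_\varepsilon,z_\varepsilon-w_\varepsilon)$ combined with (i)--(ii). Your explicit remark that the rescaling $s=\varepsilon t$ makes all Sobolev constants uniform in $\varepsilon$ is a helpful addition that the paper leaves implicit.
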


\begin{proof}
The first point is similar to the proof of the (PS) condition in Lemma \ref{EP-S}, so we omit it. We focus on the second and last point. We set
$$g_{\varepsilon}=g(u_{\varepsilon},z_{\varepsilon}^{+}), \quad z_{1}=z_{\varepsilon}^{+}+g_{\varepsilon}, \quad z_{2}=z_{\varepsilon}^{-}-g_{\varepsilon}$$
so that $z_{\varepsilon}=z_{1}+z_{2}$ and $z_{2}\in H^{\frac{1}{2},-}$. Then we recall that $\langle \nabla_{z}E_{\varepsilon}(u,z_{1}),z_{2}\rangle=0$. Hence,
$$-\langle g_{\varepsilon},z_{2}\rangle -\frac{1}{\varepsilon}\int_{[-1,1]}u_{\varepsilon}^{2}\langle z_{1},z_{2}\rangle \ dt =0.$$
On the other hand, since $\|\nabla E_{\varepsilon}(u_{\varepsilon},z_{\varepsilon})\|_{\varepsilon}\to 0$, we have
$$\langle \nabla_{z}E_{\varepsilon}(u_{\varepsilon},z_{\varepsilon}),z_{2}\rangle= -\langle z_{\varepsilon}^{-},z_{2}\rangle -\frac{1}{\varepsilon}\int_{[-1,1]}u_{\varepsilon}^{2}\langle z_{\varepsilon},z_{2}\rangle \ dt=o(\|z_{2}\|_{\frac{1}{2},\varepsilon}).$$
By taking the difference, it leads to
$$\|z_{2}\|^{2}_{\frac{1}{2},\varepsilon}+\frac{1}{\varepsilon}\int_{[-1,1]}u_{\varepsilon}^{2}|z_{2}|^{2}\ dt =o(\|z_{2}\|_{\frac{1}{2},\varepsilon}).$$
Thus
$$\|z_{2}\|_{\frac{1}{2},\varepsilon}\leq o(\|\nabla E_{\varepsilon}(u_{\varepsilon},z_{\varepsilon})\|_{\varepsilon})=o(1),$$
which proves $(ii)$.\\
For the proof of $(iii)$, we start by writing
$$\nabla F_{\varepsilon}(u_{\varepsilon},z^{+}_{\varepsilon})=\nabla E_{\varepsilon}(u_{\varepsilon},z_{1})=\nabla E_{\varepsilon}(u_{\varepsilon},z_{\varepsilon}-z_{2}).$$
Expanding the last term and using $\nabla E(u_{\varepsilon},z_{\varepsilon})\to 0$ and $z_{2}\to 0$, we have the desired result.
\end{proof}

\begin{lemma}
If $(u_{\varepsilon},z_{\varepsilon})\in H^{1}_{per}(1)\times H^{\frac{1}{2}}_{per}(1)$ satisfies $(\mathcal{A})$, then there exist $t_{\varepsilon}$ and $s_{\varepsilon}$ such that $\Big(t_{\varepsilon}u_{\varepsilon},s_{\varepsilon}z_{\varepsilon}^{+}+g(t_{\varepsilon}u_{\varepsilon},s_{\varepsilon}z_{\varepsilon}^{+})\Big)\in \mathcal{N}$. Moreover, $$(t_{\varepsilon},s_{\varepsilon})\to (1,1), \quad \text{as }\quad \varepsilon \to 0.$$
\end{lemma}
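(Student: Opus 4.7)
The plan is to reduce the three Nehari conditions in $(*)$ to a two-dimensional problem in $(t,s)$ and then apply the inverse function theorem around $(1,1)$. Writing $v_\varepsilon := z_\varepsilon^+$ and setting $\zeta(t,s) := sv_\varepsilon + g(tu_\varepsilon, sv_\varepsilon)$, the third condition holds automatically by the defining equation $(\ref{pro})$ of $g$, so one only needs to solve the 2d system $\Phi_\varepsilon(t,s) = 0$, where
\[
\Phi_\varepsilon(t,s) := \Bigl(\|u_\varepsilon\|^2_{1,\varepsilon} - \tfrac{1}{\varepsilon}\int u_\varepsilon^2|\zeta(t,s)|^2\, dt,\ s\|v_\varepsilon\|^2_{\frac{1}{2},\varepsilon} - \tfrac{t^2}{\varepsilon}\int u_\varepsilon^2 \langle \zeta(t,s), v_\varepsilon\rangle\, dt\Bigr),
\]
the two components being $\partial_t F_\varepsilon(tu_\varepsilon, sv_\varepsilon)/t$ and $\partial_s F_\varepsilon(tu_\varepsilon, sv_\varepsilon)$ via the envelope property for $g$.

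The first step is to verify $\Phi_\varepsilon(1,1) = o(1)$. By Proposition~\ref{propbound}(ii), $g(u_\varepsilon, v_\varepsilon) = z_\varepsilon^- + o(1)$ in the $\|\cdot\|_{\frac{1}{2},\varepsilon}$-norm, so $\zeta(1,1) = z_\varepsilon + o(1)$; by the Sobolev embedding of $H^1$ into $L^\infty$, $\tfrac{1}{\varepsilon}\int u_\varepsilon^2 |\zeta(1,1)|^2 dt = \tfrac{1}{\varepsilon}\int u_\varepsilon^2|z_\varepsilon|^2 dt + o(1)$, and similarly for the second component. Testing the rescaled PS equations $(\ref{pss})$ against $u_\varepsilon$ and $v_\varepsilon$ respectively gives
\[
\|u_\varepsilon\|^2_{1,\varepsilon} = \tfrac{1}{\varepsilon}\int u_\varepsilon^2|z_\varepsilon|^2 dt + o(1),\quad \|v_\varepsilon\|^2_{\frac{1}{2},\varepsilon} = \tfrac{1}{\varepsilon}\int u_\varepsilon^2\langle z_\varepsilon, v_\varepsilon\rangle dt + o(1),
\]
so $\Phi_\varepsilon(1,1) \to 0$.

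The second step is to show that $D\Phi_\varepsilon(1,1)$ is invertible with uniformly bounded inverse. Differentiating the implicit identity $Ag(u,v) = P^-(u^2(v + g(u,v)))$ in $u$ and $v$, one finds that $\partial_t\zeta|_{(1,1)}$ and $\partial_s\zeta|_{(1,1)}$ solve linear problems on $H^{\frac{1}{2},-}$ of the form $(A - P^- M_{u_\varepsilon^2})X = \text{source}$; the operator $A - P^- M_{u_\varepsilon^2}$ is uniformly invertible on $H^{\frac{1}{2},-}$ since $A$ is negative definite there with spectrum bounded away from zero, and $M_{u_\varepsilon^2}$ is a bounded symmetric perturbation ($u_\varepsilon$ being uniformly bounded in $L^\infty$ by $H^1 \hookrightarrow C^{0,\frac{1}{2}}$). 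Inserting the resulting bounds on $\partial_t\zeta, \partial_s\zeta$ into the entries of $D\Phi_\varepsilon(1,1)$ and using $\Phi_\varepsilon(1,1) = o(1)$, the leading contribution of the Jacobian reproduces the Hessian of the two-parameter reduction of the limit (infinite-line) functional at the homoclinic profile $(u_0, a_0, b_0)$, which is non-singular along the two rescaling directions.

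With $|\Phi_\varepsilon(1,1)| = o(1)$ and $\|D\Phi_\varepsilon(1,1)^{-1}\|$ uniformly bounded, the inverse function theorem produces a unique $(t_\varepsilon, s_\varepsilon)$ near $(1,1)$ with $\Phi_\varepsilon(t_\varepsilon, s_\varepsilon) = 0$, placing $(t_\varepsilon u_\varepsilon,\, s_\varepsilon v_\varepsilon + g(t_\varepsilon u_\varepsilon, s_\varepsilon v_\varepsilon)) \in \mathcal{N}$; the smallness of $\Phi_\varepsilon(1,1)$ automatically forces $(t_\varepsilon, s_\varepsilon) \to (1,1)$. The main obstacle is establishing the uniform non-degeneracy of $D\Phi_\varepsilon(1,1)$; the cleanest route is a contradiction argument, extracting a local weak limit along any degenerating subsequence and contradicting the non-degeneracy of the homoclinic orbit as a critical point of the limit Hamiltonian system on $\mathbb{R}$, a fact verifiable directly from the explicit formulas for $(u_0, a_0, b_0)$.
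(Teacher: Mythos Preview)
Your reduction to a two-dimensional problem in $(t,s)$, using the envelope property of $g$ to eliminate the third Nehari condition, is valid and arguably cleaner than the paper's setup, which works with a three-component map $G(t,s,h)$ on $\R\times\R\times H^{\frac{1}{2},-}$ and applies the inverse function theorem there. Your verification that $\Phi_\varepsilon(1,1)=o(1)$ is also correct.

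The genuine gap is in your argument for the uniform invertibility of $D\Phi_\varepsilon(1,1)$. You assert that its leading part ``reproduces the Hessian of the two-parameter reduction of the limit functional at the homoclinic profile $(u_0,a_0,b_0)$'' and then propose a blow-up/contradiction argument based on the non-degeneracy of that homoclinic. This is both unjustified and unnecessary. It is unjustified because a sequence satisfying $(\mathcal{A})$ is only assumed to have energy in $[c_1,c_2]$ and vanishing gradient; nothing forces its rescaled limit to be the ground-state homoclinic rather than some other critical point of the limit problem, and the lemma must apply to \emph{any} such sequence (it is later invoked, via Lemma~\ref{lem2}, for the explicit test sequence $(\overline{u}_\varepsilon,\overline{z}_\varepsilon)$ of Lemma~\ref{lem3}). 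The concentration analysis you would need is carried out only afterward in the paper, and only for minimizers on $\mathcal{N}$, so appealing to it here is circular.

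It is unnecessary because the Jacobian can be computed directly, which is what the paper does. Setting $c_\varepsilon=\frac{1}{\varepsilon}\int u_\varepsilon^2|z_\varepsilon^++g_\varepsilon|^2\,dt$, the $2\times2$ block of $DG(1,1,g_\varepsilon)$ has diagonal entries $2\langle\nabla_u E_\varepsilon,u_\varepsilon\rangle$ and $2\langle\nabla_z E_\varepsilon,z_\varepsilon^++g_\varepsilon\rangle$, both $o(1)$ by $(\mathcal{A})$ and Proposition~\ref{propbound}, and off-diagonal entries $-2c_\varepsilon$. The energy lower bound in $(\mathcal{A})$ alone gives $c_\varepsilon\geq 2c_1+o(1)>0$, so this block tends to $\left(\begin{smallmatrix}0&-2c_0\\-2c_0&0\end{smallmatrix}\right)$ with $c_0>0$, which is manifestly invertible. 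The paper then observes $C_1^*B_0^{-1}C_1=0$ and that $\tilde{A}=P^-(A_\varepsilon-u_\varepsilon^2)$ is uniformly invertible on $H^{\frac{1}{2},-}$, yielding the uniform bound on the full inverse. In your two-dimensional formulation the same computation goes through: the entries of $D\Phi_\varepsilon(1,1)$ differ from this $2\times2$ block only by terms involving $\partial_t g,\partial_s g$, which are uniformly bounded by inverting $\tilde{A}$ on bounded sources. The point is that invertibility is an elementary algebraic consequence of $c_\varepsilon\geq c_0>0$, itself a consequence of the energy lower bound in $(\mathcal{A})$; no asymptotic PDE analysis or reference to the homoclinic is needed.
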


\begin{proof}
Let us consider the map $G:\R \times \R \times H^{\frac{1}{2},-}\to \R \times \R \times H^{\frac{1}{2},-}$ defined by
$$G(t,s,h)=\left(\begin{array}{cc}
\langle \nabla_{u}E_{\varepsilon}(tu_{\varepsilon},sz_{\varepsilon}^{+}+h),tu_{\varepsilon}\rangle\\
\langle \nabla_{z}E(tu_{\varepsilon},s(z_{\varepsilon}^{+}+h)),s(z_{\varepsilon}^{+}+h)\rangle\\
P^{-}\Big(A_{\varepsilon}(z_{\varepsilon}^{+}+h)-t^{2}u_{\varepsilon}^{2}(z_{\varepsilon}^{+}+h)\Big)
\end{array}
\right) .$$
Clearly, $G(t,s,h)=0$ if and only if $(tu_{\varepsilon},s(z_{\varepsilon}^{+}+h))\in \mathcal{N}$. So, we set
$$c_{\varepsilon}=\frac{1}{\varepsilon}\int_{[-1,1]}|u_{\varepsilon}|^{2} \; |z_{\varepsilon}^{+}+g_{\varepsilon}|^{2}\; dt$$
and from condition $(\mathcal{A})$ we can assume that $c_{\varepsilon}\to c_{0}>0$. Indeed, since $(u_{\varepsilon},z_{\varepsilon})$ satisfies $(\mathcal{A})$, we have from Proposition \ref{propbound} that  $\|u_{\varepsilon}\|_{1,\varepsilon}$ and $\|z_{\varepsilon}\|_{\frac{1}{2},\varepsilon}$ are bounded. Moreover,
$$\langle \nabla_{u} E_{\varepsilon}(u_{\varepsilon},z_{\varepsilon}),u_{\varepsilon}\rangle=o(1) \text{ and } \langle \nabla_{z} E_{\varepsilon}(u_{\varepsilon},z_{\varepsilon}),z_{\varepsilon}\rangle =o(1).$$
Hence
$$0<c_{1} \leq E_{\varepsilon}(u_{\varepsilon},z_{\varepsilon})=\frac{1}{2\varepsilon}\int_{[-1,1]}|u_{\varepsilon}|^{2}|z_{\varepsilon}|^{2}\; dt+o(1).$$
On the other hand, again using Proposition \ref{propbound}, we have $\|z_{\varepsilon}^{-}-g(u_{\varepsilon},z_{\varepsilon}^{+})\|_{\frac{1}{2},\varepsilon} \to 0$.
Therefore, we have
$$c_{\varepsilon}=\frac{1}{\varepsilon}\int_{[-1,1]}|u_{\varepsilon}|^{2}|z_{\varepsilon}|^{2}\; dt+o(1)\geq 2c_{1}+o(1).$$
We then compute
$$K=DG(1,1,g(u_{\varepsilon},z_{\varepsilon}^{+}))=\left[\begin{array}{lcc}
B_{\varepsilon}& C_{1}\\
C_{1}^{*}& \widetilde{A}
\end{array}
\right],$$

%\begin{align}
%K&=\left[\begin{array}{lcc}
%2\langle \nabla_{u} E_{\varepsilon}(u_{\varepsilon},z^{+}_{\varepsilon}+g_{\varepsilon}),u_{\varepsilon}\rangle & -2c_{\varepsilon} & -2\frac{1}{\varepsilon}\int |u_{\varepsilon}|^{2}\langle z_{\varepsilon}^{+}+g_{\varepsilon}, \cdot\rangle\ dt\\
%-2c_{\varepsilon}& 2\langle \nabla_{u} E(u_{\varepsilon},z^{+}_{\varepsilon}+g_{\varepsilon}),z_{\varepsilon}^{+}+g_{\varepsilon}\rangle & 0\\
%P^{-}(|u_{\varepsilon}|^{2}(z_{\varepsilon}^{+}+g_{\varepsilon})& 0 & \mathcal{A}
%\end{array}\right]\notag\\
%&=\left[\begin{array}{lcc}
%B_{\varepsilon}& C_{1}\\
%C_{1}^{*}& \mathcal{A}
%\end{array}
%\right],
%\end{align}

\noindent
where we have denoted
$$\widetilde{A}\varphi=P^{-}(A_{\varepsilon}\varphi-|u_{\varepsilon}|^{2}\varphi),$$ which is an invertible operator on $H^{\frac{1}{2},-}$,
$$B_{\varepsilon}=\left[\begin{array}{lcl}
2\langle \nabla_{u} E_{\varepsilon}(u_{\varepsilon},z^{+}_{\varepsilon}+g_{\varepsilon}),u_{\varepsilon}\rangle & -2c_{\varepsilon}\\
-2c_{\varepsilon}& 2\langle \nabla_{u} E_{\varepsilon}(u_{\varepsilon},z^{+}_{\varepsilon}+g_{\varepsilon})
\end{array}\right],
$$
and finally,
$$C_{1}=\left [\begin{array}{cc}
-2\frac{1}{\varepsilon}\int |u_{\varepsilon}|^{2}\langle z_{\varepsilon}^{+}+g_{\varepsilon}, \cdot\rangle\ dt\\
0
\end{array}
\right].$$

\noindent
Notice that $$B_{\varepsilon}\to B_{0}:=\left[\begin{array}{lcc}
0 & -2c_{0}\\
-2c_{0}& 0
\end{array}\right], \text{ as } \varepsilon \to 0.$$
Moreover, since $B_{0}$ is invertible and $ C_{1}^{*}B_{0}^{-1}C_{1}=0$, we have that $K$ is invertible for $\varepsilon$ small enough and $K^{-1}$ is bounded uniformly as $\varepsilon \to 0$. Hence, by the inverse function theorem, since $G(1,1,g_{\varepsilon})\to 0$ as $\varepsilon$ goes to zero, there exists $\varepsilon_{0}>0$ such that for all $\varepsilon \in (0,\varepsilon_{0})$,  there exists $t_{\varepsilon}$ and $s_{\varepsilon}$ so that
$$G(t_{\varepsilon},s_{\varepsilon},h)=0.$$
Moreover, one easily sees that $|t_{\varepsilon}-1|+|s_{\varepsilon}-1|\leq O(\|\nabla E_{\varepsilon}(u_{\varepsilon}, z_{\varepsilon})\|_{\varepsilon})$.
\end{proof}

\begin{lemma}\label{lem2}
If $(u_{\varepsilon},z_{\varepsilon})\in H^{1}_{per}(1)\times H^{\frac{1}{2}}_{per}(1)$ satisfies $(\mathcal{A})$, then there exists $(\tilde{u}_{\varepsilon},\tilde{z}_{\varepsilon})\in \mathcal{N}$ such that
$$E_{\varepsilon}(\tilde{u}_{\varepsilon},\tilde{z}_{\varepsilon})=E_{\varepsilon}(u_{\varepsilon},z_{\varepsilon})+o(1).$$
\end{lemma}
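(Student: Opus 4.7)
The natural candidate for $(\tilde u_\varepsilon, \tilde z_\varepsilon)$ is the one produced by the previous lemma: set
$$
\tilde u_\varepsilon := t_\varepsilon u_\varepsilon, \qquad \tilde z_\varepsilon := s_\varepsilon z_\varepsilon^+ + g(t_\varepsilon u_\varepsilon, s_\varepsilon z_\varepsilon^+),
$$
so that $(\tilde u_\varepsilon, \tilde z_\varepsilon)\in\mathcal{N}$ automatically and, by definition of $F_\varepsilon$, $E_\varepsilon(\tilde u_\varepsilon,\tilde z_\varepsilon) = F_\varepsilon(t_\varepsilon u_\varepsilon, s_\varepsilon z_\varepsilon^+)$. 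The plan is to estimate $E_\varepsilon(\tilde u_\varepsilon, \tilde z_\varepsilon) - E_\varepsilon(u_\varepsilon, z_\varepsilon)$ by bridging through the intermediate value $F_\varepsilon(u_\varepsilon, z_\varepsilon^+) = E_\varepsilon(u_\varepsilon, z_\varepsilon^+ + g(u_\varepsilon, z_\varepsilon^+))$ and showing separately that both halves of the bridge are $o(1)$.

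The first half, $E_\varepsilon(u_\varepsilon, z_\varepsilon) = F_\varepsilon(u_\varepsilon, z_\varepsilon^+) + o(1)$, I would establish by reusing the computation in the proof of Proposition \ref{existenc of g}: the map $w \mapsto E_\varepsilon(u_\varepsilon, z_\varepsilon^+ + w)$ is, up to a $w$-independent constant, a strictly concave quadratic functional on $H^{\frac{1}{2},-}$ whose unique maximizer is $g(u_\varepsilon, z_\varepsilon^+)$. Its Taylor expansion at the maximum yields
$$
E_\varepsilon(u_\varepsilon, z_\varepsilon) - F_\varepsilon(u_\varepsilon, z_\varepsilon^+) = -\frac{1}{2}\Big(\|z_\varepsilon^- - g(u_\varepsilon, z_\varepsilon^+)\|^2_{\frac{1}{2},\varepsilon} + \frac{1}{\varepsilon}\int_{-1}^1 u_\varepsilon^2\,|z_\varepsilon^- - g(u_\varepsilon, z_\varepsilon^+)|^2\, dt\Big),
$$
which is $o(1)$ by Proposition \ref{propbound}(ii), together with the uniform $L^\infty$ bound on $u_\varepsilon$ furnished by the one-dimensional embedding $H^{1}_{per}(1)\hookrightarrow L^\infty$ in the $\varepsilon$-adapted norm, itself a direct consequence of Proposition \ref{propbound}(i).

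For the second half, $F_\varepsilon(t_\varepsilon u_\varepsilon, s_\varepsilon z_\varepsilon^+) = F_\varepsilon(u_\varepsilon, z_\varepsilon^+) + o(1)$, I would argue via a mean value inequality: $F_\varepsilon$ is of class $C^1$ with gradient uniformly bounded in $\varepsilon$ on $\|\cdot\|_\varepsilon$-bounded sets. This uniform bound follows from the explicit quartic structure of $E_\varepsilon$ combined with the Lipschitz dependence of $g$ on its arguments, obtained in turn by differentiating the defining identity (\ref{pro}) and observing that $P^-(A_\varepsilon - u^2\,\cdot\,)$ is uniformly invertible on $H^{\frac{1}{2},-}$ in the rescaled norm. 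Since $(u_\varepsilon, z_\varepsilon^+)$ is bounded and $|t_\varepsilon - 1| + |s_\varepsilon - 1| \to 0$, integrating the gradient along the segment joining $(u_\varepsilon, z_\varepsilon^+)$ to $(t_\varepsilon u_\varepsilon, s_\varepsilon z_\varepsilon^+)$ gives the required estimate, and the two halves combined prove the lemma.

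The main technical obstacle is verifying that the implicit constants appearing in the Sobolev embeddings and in the inversion of the linearization defining $g$ are genuinely uniform in $\varepsilon$. This is precisely what the $\varepsilon$-adapted norms $\|\cdot\|_{1,\varepsilon}$, $\|\cdot\|_{\frac{1}{2},\varepsilon}$ and $\|\cdot\|_{L^{p},\varepsilon}$ are calibrated for, so the work reduces to tracking constants carefully through the unscaled estimates already performed in the preceding subsection.
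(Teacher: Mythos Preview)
Your proposal is correct and follows essentially the same approach as the paper: both choose $\tilde u_\varepsilon = t_\varepsilon u_\varepsilon$ and the corresponding $\tilde z_\varepsilon$ from the previous lemma, then exploit the smallness of $(t_\varepsilon-1,\,s_\varepsilon-1)$ and of $z_\varepsilon^{-}-g(u_\varepsilon,z_\varepsilon^{+})$ to compare energies via a first-order (Taylor/mean-value) argument on a bounded set. The only cosmetic difference is that you route the comparison through $F_\varepsilon$ in two explicit halves, while the paper expands $E_\varepsilon$ directly at $(u_\varepsilon,z_\varepsilon)$ in one line; the ingredients (Proposition~\ref{propbound} and uniform bounds in the $\varepsilon$-adapted norms) are identical.
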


\begin{proof}
The proof here is straightforward and it follows directly from the previous Lemma. Indeed, Let $\tilde{u}_{\varepsilon}=t_{\varepsilon}u_{\varepsilon}$ and $\tilde{z}_{\varepsilon}=s_{\varepsilon}(z_{\varepsilon}^{+}+g(t_{\varepsilon}u_{\varepsilon},z_{\varepsilon}^{+}))$. Then we have
\begin{align}
E_{\varepsilon}(\tilde{u}_{\varepsilon},\tilde{z}_{\varepsilon})&=E_{\varepsilon}\Big(u_{\varepsilon}+(t_{\varepsilon}-1)u_{\varepsilon},z_{\varepsilon}+(s_{\varepsilon}-1)z_{\varepsilon}^{+}+ g(t_{\varepsilon}u_{\varepsilon},z_{\varepsilon}^{+})-g(t_{\varepsilon}u_{\varepsilon},z_{\varepsilon}^{+})\notag\\
&\quad+g(t_{\varepsilon}u_{\varepsilon},z_{\varepsilon}^{+})-z_{\varepsilon}^{-}+(s_{\varepsilon}-1)g(t_{\varepsilon}u_{\varepsilon},z_{\varepsilon}^{+})\Big)\notag\\
&=E_{\varepsilon}(u_{\varepsilon},z_{\varepsilon})+O(\|\nabla E_{\varepsilon} (u_{\varepsilon},z_{\varepsilon})\|^{2}_{\varepsilon})\notag
\end{align}
\end{proof}

\noindent
It is important to notice that if $(u_{\varepsilon},z_{\varepsilon})$ is the solution obtained from the min-max process (or minimization on $\mathcal{N}$), then there exists $c_{0}>0$ such that
\begin{equation}\label{lower}
\frac{1}{\varepsilon}\int_{[-1,1]}u_{\varepsilon}^{2}|z_{\varepsilon}|^{2}\ dt \geq c_{0}.
\end{equation}
Indeed, we have
\begin{align}
E_{\varepsilon}(u_{\varepsilon},z_{\varepsilon})&\geq \sup_{t>0,s>0,w\in H^{\frac{1}{2},-}}E(tu_{\varepsilon},sz_{\varepsilon}^{+}+w)\notag\\
&\geq \sup_{t>0,s>0}E(tu_{\varepsilon},sz_{\varepsilon}^{+})\notag\\
&\geq \max_{t>0,s>0}\Big( t^{2}\|u_{\varepsilon}\|^{2}_{1,\varepsilon}+s^{2}\|z^{+}_{\frac{1}{2},\varepsilon}\|^{2}_{\varepsilon}-\frac{1}{2}\Big(t^{4}\|u_{\varepsilon}\|^{4}_{L^{4},\varepsilon}+s^{4}\|z_{\varepsilon}^{+}\|_{L^{4},\varepsilon}^{4}\Big)\Big)\notag\\
&\geq \max_{t>0,s>0}\Big( C_{1}t^{2}-C_{2}t^{4}+\tilde{C}_{1}s^{2}-\tilde{C}_{2}s^{4} \Big) \geq c_{0},\notag
\end{align}
where $C_{1}, \tilde{C}_{1}, C_{2}, \tilde{C}_{2}$ are constants that depend on $c_{1}$ and $c_{2}$ appearing in condition $(\mathcal{A})$. Thus, if we define $\delta_{\varepsilon}$ by $$\delta_{\varepsilon}=\inf_{(u,z)\in \mathcal{N}}E_{\varepsilon}(u,z),$$
then
\begin{equation}\label{ineq}
E_{\varepsilon}(u_{\varepsilon},z_{\varepsilon})\geq \delta_{\varepsilon}\geq c_{0}>0.
\end{equation}
\smallskip

\noindent
Now we want to find an upper bound for $\delta_{\varepsilon}$, and in order to do that we need to construct a suitable sequence $(u_{\varepsilon},z_{\varepsilon})$ satisfying $(\mathcal{A})$ and computationally friendly. We consider then the limiting functional defined on $H^{1}(\R; \R)\times H^{\frac{1}{2}}(\R; \C)$ by
$$E(u,z)=\frac{1}{2}\Big( \int_{\R} |u'|^{2}+\frac{1}{4}u^{2} + \langle Az,z\rangle -u^{2}|z|^{2}\ dt\Big).$$
Its critical points satisfy the following Euler-Lagrange equation
\begin{equation}\label{eqlim}
\left\{\begin{array}{lll}
-u''+\frac{1}{4}u=u|z|^{2}\\
& \text{ on } \R\\
Az=u^{2}z.
\end{array}
\right.
\end{equation}

\noindent
We denote by $\mathcal{M}$ the set of ground state solutions of $(\ref{eqlim})$ and we let
$$\delta_{0}=\inf\Big\{E(u,z); \nabla E(u,z)=0\Big\}=E(U,Z),$$
for $(U,Z)\in \mathcal{M}$.

\begin{lemma}\label{classificationUPSI}
Let $(U,Z)\in \mathcal{M}$. Then up to translation and scaling,
\begin{equation}\label{UPSI}
U(t)=2^{-\frac{1}{4}}\cosh^{-\frac{1}{2}}(t), \qquad
Z(t)=\frac{3}{2\sqrt{2}}\cosh^{-\frac{3}{2}}(t)\left(\begin{array}{ll}
e^{-\frac{t}{2}} \\
e^{\frac{t}{2}}
\end{array}
\right) . 
\end{equation}
\end{lemma}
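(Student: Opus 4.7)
The plan is to classify elements of $\mathcal{M}$ by means of the Hamiltonian ODE structure, a reflection symmetry, and explicit integration via a tailored ansatz.

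First, writing $Z=(a,b)^{T}$ and $V=U'$, the elliptic-Dirac system $(\ref{eqlim})$ is equivalent to the Hamiltonian flow $(\ref{ham})$ with $H$ as in $(\ref{HamiltonianH})$. A standard bootstrap together with the asymptotic analysis of the linearization shows that any $(U,Z)\in \mathcal{M}$ yields an orbit $(U,V,a,b)(t)$ decaying exponentially at $\pm\infty$; the orbit is therefore a homoclinic at the saddle $P_{0}=(0,0,0,0)$ and lies on $\{H=0\}$. A maximum-principle argument for the $U$-equation gives $U>0$ on $\R$ with a unique global maximum, which (after translating time) may be placed at $t=0$, so $V(0)=0$.

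Next, I would exploit the involution $(u,v,a,b)(t)\mapsto(u(-t),-v(-t),b(-t),a(-t))$, which a direct substitution shows maps solutions of $(\ref{ham})$ to solutions and preserves the energy functional $E$. The key claim is that every ground state orbit, with peak placed at $t=0$, is invariant under this involution -- equivalently $a(0)=b(0)$ -- from which one obtains $U(-t)=U(t)$ and $a(t)=b(-t)$ globally by uniqueness of the flow. I expect this invariance to follow from a moving-plane argument adapted to the coupled system, or from the variational characterization of $\mathcal{M}$: the involution maps $\mathcal{M}$ to $\mathcal{M}$, and by a concentration-compactness comparison of an arbitrary ground state with its reflection they must coincide up to translation, forcing the symmetry.

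With the reflection symmetry in hand, the linear decay rates at $P_{0}$ (namely $e^{-|t|/2}$ for $u$ and $e^{-|t|}$ for $(a,b)$) motivate the ansatz $U(t)=\alpha\cosh^{-1/2}(t)$, $a(t)=\beta e^{-t/2}\cosh^{-3/2}(t)$, and hence $b(t)=a(-t)=\beta e^{t/2}\cosh^{-3/2}(t)$. Substituting into $(\ref{systemab})$ and matching the coefficients of $\cosh^{-1/2}(t)$ and $\cosh^{-5/2}(t)$ in the scalar equation, together with the exponential matching in the spinor equations, yields algebraic relations that pin down $\alpha,\beta>0$ uniquely; this produces the explicit formulas $(\ref{UPSI})$. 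The constraint $H\equiv 0$ then holds automatically along the orbit, serving as a consistency check.

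The main obstacle is Step 2, namely the rigorous proof that an arbitrary ground state enjoys the reflection symmetry. Without this, one cannot reduce to a one-parameter family of ansatz solutions and the classification breaks down. Once the symmetry is secured, the remainder is almost mechanical: the symmetric ground state is determined by $(\alpha,\beta)$ satisfying the matched algebraic relations, and the homoclinic condition $H\equiv 0$ together with decay at one infinity rules out all other solutions of the symmetric reduced system. The residual ``scaling'' freedom in the statement corresponds, on the original $\R^{3}$ picture, to dilations -- which on the cylinder $\R\times\S^{2}$ coincide with the time translation already quotiented out.
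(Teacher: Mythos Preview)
Your approach is genuinely different from the paper's, and it carries real gaps.

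\textbf{What the paper does.} The paper does not attempt a direct ODE classification at all. It invokes the classification from \cite{BM} of ground state solutions of the full Dirac--Einstein system on $\R^{3}$: every PDE ground state is a bubble $(U_{\lambda},\Psi_{\lambda})$ as in (\ref{UPSIR3}). When centered at the origin these are radial in the sense of the ansatz (\ref{ER3}), so after the Emden--Fowler change of variables they yield exactly (\ref{UPSI}). Conversely, since the PDE bubbles lie in the ansatz class, the ODE ground state level $\delta_{0}$ coincides (up to the $\S^{2}$ factor) with the PDE ground state level; hence any $(U,Z)\in\mathcal{M}$, viewed as a PDE solution, is itself a bubble, and the lemma follows. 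The heavy lifting is entirely outsourced to the Liouville-type theorem in \cite{BM}.

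\textbf{Gaps in your route.} You acknowledge that Step~2 (reflection symmetry of an arbitrary ground state) is the main obstacle, and indeed neither a moving-plane argument for this coupled first-order/second-order system nor the variational comparison you sketch is straightforward; the Dirac part has no maximum principle. But there is also an unacknowledged gap in Step~3: even granting the symmetry $U$ even, $a(t)=b(-t)$, you have not shown that \emph{every} symmetric homoclinic on $\{H=0\}$ is of the ansatz form $\alpha\cosh^{-1/2}$, $\beta e^{\mp t/2}\cosh^{-3/2}$. The ansatz produces \emph{a} solution; matching decay rates at infinity does not by itself force the global profile. After imposing symmetry and $H=0$ you still have a one-parameter family of initial data $(U(0),0,a(0),a(0))$, and a shooting/uniqueness argument is required to isolate the homoclinic --- you have not supplied one. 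The paper's approach avoids both difficulties by appealing to the PDE classification, where conformal invariance and known rigidity results do the work.
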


\begin{proof}
We recall from \cite{BM} that all the ground state solutions of $(\ref{eq1})$ with $M=\S^{3}$ or $\R^{3}$ are classified. Indeed, if $(U,\Psi)$ is a ground state solution, then there exists a parallel spinor $\Phi_{0}\in \S_{\C}^{2}$, $x_{0}\in \R^{3}$ and $\lambda >0$ such that
\begin{align}\label{UPSIR3}
  U(x) & =U_{\lambda}(x)=\left(\frac{2\lambda}{\lambda^{2}+|x-x_{0}|^{2}}\right)^{\frac{1}{2}} \notag\\
  \Psi(x) & =\Psi_\lambda(x)=\left(\frac{2\lambda}{\lambda^{2}+|x-x_{0}|^{2}}\right)^{\frac{3}{2}}\big(\mathbf{1}- (x-x_{0})\big)\cdot\Phi_{0},
\end{align}
where $\mathbf{1}$ denotes the identity endomorphism of the spinor bundle. In particular, if $x_{0}=0$, we see that $U$ is radial and $\Psi \in E(\R^{3})$, as defined in (\ref{ER3}). So any ground state solution satisfies our radial ansatz. Hence, after the change to cylindrical coordinates, we obtain the expression in (\ref{UPSI}) and we have that $(U,\Psi)\in \mathcal{M}$. Therefore the energy level $\delta_{0}$ corresponds indeed to ground state solutions of $(\ref{eq1})$ on $\R^{3}$, which finishes the proof.
\end{proof}

\noindent
\begin{lemma}\label{lem3}
Let $(U,Z)\in \mathcal{M}$ and $\beta \in C^{\infty}_{c}(-1,1)$ such that $\beta=1$ on $\left[-\frac{1}{2},\frac{1}{2}\right]$. We set
$$\overline{u}_{\varepsilon}(t)=\beta(t) U\left(\frac{ t}{\varepsilon}\right), \qquad \overline{z}_{\varepsilon}(t)=\beta(t)Z\left(\frac{ t}{\varepsilon}\right). $$
Then we have
$$E_{\varepsilon}(\overline{u}_{\varepsilon},\overline{z}_{\varepsilon})\to \delta_{0} \quad \text{  and } \quad  \nabla E_{\varepsilon}(\overline{u}_{\varepsilon},\overline{z}_{\varepsilon})\to 0$$ 
as $\varepsilon \to 0.$
\end{lemma}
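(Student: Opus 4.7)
The plan is a standard concentration and cut-off computation. I would change variables $s=\varepsilon t$ to convert every $\frac{1}{\varepsilon}\int_{-1}^{1}$ appearing in $E_{\varepsilon}$ into an integral $\int_{-1/\varepsilon}^{1/\varepsilon}$ in the natural variable of the limit problem on $\R$, and then use the explicit exponential decay of $(U,Z)$ from Lemma \ref{classificationUPSI} (every component decays like $e^{-|t|/2}$) to pass to the limit by dominated convergence. The only algebraic subtlety is the spinor rescaling: a direct computation yields
$$A_\varepsilon\bigl(\beta(\cdot)Z(\cdot/\varepsilon)\bigr)(s)=\beta(s)(AZ)(s/\varepsilon)-\varepsilon J\beta'(s)Z(s/\varepsilon),$$
so $A_\varepsilon$ commutes with rescaling modulo a commutator that carries an explicit factor of $\varepsilon$ and is supported on $\{1/2\le|s|\le 1\}$.

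For the energy convergence, substituting $t=s/\varepsilon$ in each term of $E_\varepsilon(\overline{u}_\varepsilon,\overline{z}_\varepsilon)$ and expanding $(\overline u_\varepsilon')^{2}$ via the product rule yields
$$E_\varepsilon(\overline u_\varepsilon,\overline z_\varepsilon)=\frac{1}{2}\int_{-1/\varepsilon}^{1/\varepsilon}\beta(\varepsilon t)^{2}\Big(|U'|^{2}+\tfrac14 U^{2}+\langle AZ,Z\rangle-\beta(\varepsilon t)^{2}U^{2}|Z|^{2}\Big)\,dt+R(\varepsilon),$$
where $R(\varepsilon)$ collects all cross terms that carry a $\beta'$ or $\beta''$ factor. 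Since $\beta\equiv 1$ on $[-1/2,1/2]$, each such term is supported in $\{1/2\le|s|\le 1\}$, where $|s|/\varepsilon\ge 1/(2\varepsilon)$ forces $U$, $U'$ and $Z$ to be exponentially small by Lemma \ref{classificationUPSI}; hence $R(\varepsilon)=O(e^{-c/\varepsilon})$. Dominated convergence together with $\beta(\varepsilon t)\to 1$ pointwise then yields $E_\varepsilon(\overline u_\varepsilon,\overline z_\varepsilon)\to E(U,Z)=\delta_0$.

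For the gradient, using that $(U,Z)$ solves the limit system $-U''+\tfrac14 U=U|Z|^{2}$, $AZ=U^{2}Z$, I compute directly
\begin{align*}
-\varepsilon^{2}\overline u_\varepsilon''+\tfrac14\overline u_\varepsilon-\overline u_\varepsilon|\overline z_\varepsilon|^{2}&=\beta(1-\beta^{2})\,U\,|Z|^{2}-\varepsilon^{2}\beta''\,U-2\varepsilon\beta'\,U',\\
A_\varepsilon\overline z_\varepsilon-\overline u_\varepsilon^{2}\overline z_\varepsilon&=\beta(1-\beta^{2})\,U^{2}\,Z-\varepsilon J\beta'\,Z,
\end{align*}
where the arguments of $U$ and $Z$ on the right are $s/\varepsilon$. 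Each error is either multiplied by a power of $\varepsilon$ or supported on $\{1/2\le|s|\le 1\}$ (because $\beta(1-\beta^{2})\equiv 0$ on $[-1/2,1/2]$), where $U(s/\varepsilon)$ and $Z(s/\varepsilon)$ are $O(e^{-c/\varepsilon})$. Their $L^{2}_\varepsilon$-norms are therefore exponentially small, and the continuous embeddings $L^{2}_\varepsilon\hookrightarrow H^{-1}_\varepsilon$ and $L^{2}_\varepsilon\hookrightarrow H^{-1/2}_\varepsilon$ imply $\|\nabla E_\varepsilon(\overline u_\varepsilon,\overline z_\varepsilon)\|_\varepsilon\to 0$.

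The step I expect to be the main obstacle is the spinor bound, since the error $\beta(1-\beta^{2})U^{2}Z$ carries no explicit $\varepsilon$ prefactor and the natural dual norm one has to use is the fractional $H^{-1/2}_\varepsilon$. The point is that the commutator identity for $A_\varepsilon$ recorded above, combined with exponential decay of $Z$ on $\{|s|\ge 1/2\}$, lets one replace $H^{-1/2}_\varepsilon$ by the easier $L^{2}_\varepsilon$ norm in that estimate. Once this observation is settled, all remaining bookkeeping is routine.
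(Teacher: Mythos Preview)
Your proposal is correct and follows essentially the same route as the paper: change variables to the natural scale, use that $(U,Z)$ solves the limit system to isolate error terms carrying either an explicit power of $\varepsilon$ or a factor $\beta(1-\beta^{2})$ supported in $\{1/2\le |s|\le 1\}$, bound these in $L^{2}_{\varepsilon}$, and then pass to the dual norms. The only cosmetic difference is that you invoke the explicit exponential decay of $(U,Z)$ to get $O(e^{-c/\varepsilon})$ bounds, whereas the paper is content with the weaker observation that the relevant tail integrals (e.g.\ $\int_{|s|\ge 1/(2\varepsilon)}U^{2}|Z|^{4}\,ds$) vanish by dominated convergence; either suffices.
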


\begin{proof}
First, we observe that
\begin{align}
\nabla_{u}E_{\varepsilon}(\overline{u}_{\varepsilon},\overline{z}_{\varepsilon}) &=  -\beta(t)U''\left(\frac{t}{\varepsilon}\right)-2\varepsilon\beta'(t)U'\left(\frac{t}{\varepsilon}\right)-\varepsilon^{2}\beta''(t)U\left(\frac{t}{\varepsilon}\right)+ \notag\\
&\quad + \frac{1}{4}\beta(t)U\left(\frac{t}{\varepsilon}\right)-\beta^{3}(t)U\left(\frac{t}{\varepsilon}\right)\left|Z\left(\frac{t}{\varepsilon}\right)\right|^{2}\notag\\
&=2\varepsilon\beta'(t)U'\left(\frac{t}{\varepsilon}\right)+\varepsilon^{2}\beta''(t)U\left(\frac{t}{\varepsilon}\right)+ \left(\beta(t)-\beta^{3}(t)\right)U\left(\frac{t}{\varepsilon}\right)\left|Z\left(\frac{t}{\varepsilon}\right)\right|^{2}.\notag
\end{align}

\noindent
Next we notice that
$$\frac{1}{\varepsilon}\int_{[-1,1]}\varepsilon^{2}\left|\beta'(t)U'\left(\frac{t}{\varepsilon}\right)\right|^{2}\ dt =\varepsilon^{2}\int_{\left[-\frac{1}{\varepsilon},\frac{1}{\varepsilon}\right]}|\beta'(\varepsilon t))U'(t)|^{2}\ dt\leq C\varepsilon^{2}\int_{\R}|U'|^{2}\ dt \to 0.$$
Similarly,
$$\frac{1}{\varepsilon}\int_{[-1,1]}\varepsilon^{4}\Big|\beta''(t)U\left(\frac{t}{\varepsilon}\right)\Big|^{2}\ dt\leq C\varepsilon^{4}\int_{\R}|U(t)|^{2}\ dt \to 0.$$
But for the last term, we have
$$\frac{1}{\varepsilon}\int_{[-1,1]}(\beta(t)-\beta^{3}(t))^{2}\left|U\left(\frac{t}{\varepsilon}\right)\right|^{2} \left|Z\left(\frac{t}{\varepsilon}\right)\right|^{4}\ dt \leq C \int_{\frac{1}{2\varepsilon}\leq |s| \leq \frac{1}{\varepsilon}}U^{2}(s)|Z(s)|^{4}\ ds \to 0.$$
This shows that $\|\nabla_{u}E_{\varepsilon}(\overline{u}_{\varepsilon},\overline{z}_{\varepsilon})\|_{L^{2},\varepsilon} \to 0$ and hence $\nabla_{u}E_{\varepsilon}(\overline{u}_{\varepsilon},\overline{z}_{\varepsilon})\to 0 $ in $H^{-1}_{per}(1)$.

\noindent
Similarly, we can show that $\nabla_{z}E_{\varepsilon}(\overline{u}_{\varepsilon},\overline{z}_{\varepsilon})\to 0$ in $H^{-\frac{1}{2}}$. We move now to the energy part:
\begin{align}
2E_{\varepsilon}(\overline{u}_{\varepsilon},\overline{z}_{\varepsilon})&=\frac{1}{\varepsilon}\int_{[-1,1]}\varepsilon^{2}\left|\beta'(t) U\left(\frac{t}{\varepsilon}\right)+\frac{1}{\varepsilon}\beta(t)U'\left(\frac{t}{\varepsilon}\right)\right|^{2} +\frac{1}{4}\beta^{2}(t)U^{2}\left(\frac{t}{\varepsilon}\right)\notag \\
&\quad + \left\langle \beta(t) J\left(Z'\left(\frac{t}{\varepsilon}\right)+BZ\left(\frac{t}{\varepsilon}\right)\right)+\varepsilon \beta'(t) JZ\left(\frac{t}{\varepsilon}\right),\beta(t) Z\left(\frac{t}{\varepsilon}\right)\right\rangle \notag \\
&\quad -\beta^{4}(t)U^{2}\left(\frac{t}{\varepsilon}\right)\left|Z\left(\frac{t}{\varepsilon}\right)\right|^{2}\ dt\notag\\
&=\int_{[-\frac{1}{\varepsilon},\frac{1}{\varepsilon}]}\beta^{2}(\varepsilon s)\left(|U'(s)|^{2}+\frac{1}{4}U^{2}(s) +\langle AZ,Z\rangle -|U(s)|^{2}|Z(s)|^{2}\right)\ ds \notag \\
&\quad+ \int_{|s|\leq \frac{1}{\varepsilon}}
\varepsilon^{2}|\beta'|^{2}(\varepsilon s)|U(s)|^{2}+2\varepsilon\beta'(\varepsilon s)\beta(\varepsilon s) U'(s)U(s)\ ds \notag \\
&\quad + \int_{|s|\leq \frac{1}{\varepsilon}}\varepsilon \beta'(\varepsilon s)\beta(\varepsilon s)\langle JZ,Z\rangle\ ds \notag \\
&\quad+\int_{\frac{1}{2\varepsilon}\leq |s|\leq \frac{1}{\varepsilon}}\left(\beta^{4}(\varepsilon s)-\beta^{2}(\varepsilon s)\right)|U(s)|^{2} |Z(s)|^{2} \ ds\notag\\
&=I+II+III+IV\notag
\end{align}
By using the dominated convergence theorem, one sees that
$$I\to 2E(U,Z)=2\delta_{0}, \text{ as } \varepsilon\to 0.$$
On the other hand
$$II+III \leq C\varepsilon\to 0.$$
So it remains to show that the last term also converges to zero. Indeed,
$$\int_{\frac{1}{2\varepsilon}\leq |s|\leq
\frac{1}{\varepsilon}}\left(\beta^{4}(\varepsilon s)-\beta^{2}(\varepsilon s)\right)|U(s)|^{2} |Z(s)|^{2} \ ds \leq
C \int_{\frac{1}{2\varepsilon}\leq |s|} |U(s)|^{2}|Z(s)|^{2}\ ds \to 0, $$
as $\varepsilon\to 0$, which finishes the proof of the Lemma.
\end{proof}

\noindent
This previous Lemma shows in particular that $(\overline{u}_{\varepsilon}, \overline{z}_{\varepsilon})$ satisfies $(\mathcal{A})$. In the next Lemma, we provide an upper bound for $ \delta_{\varepsilon}$:
\begin{lemma}
$$\delta_{0} \geq \limsup_{\varepsilon\to 0} \delta_{\varepsilon} .$$
\end{lemma}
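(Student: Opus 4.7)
The plan is to combine Lemma \ref{lem3} with Lemma \ref{lem2} in a direct way: the first produces an explicit sequence that approximates the ground state energy $\delta_0$ and is almost critical, while the second says any such sequence can be projected onto the Nehari manifold $\mathcal{N}$ at negligible energy cost. Putting the two together gives the desired bound on $\delta_\varepsilon$.

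More precisely, I would proceed as follows. First, take the concentrated test pair
\[
(\overline{u}_\varepsilon, \overline{z}_\varepsilon)(t) = \Bigl(\beta(t)U(t/\varepsilon),\,\beta(t)Z(t/\varepsilon)\Bigr)
\]
from Lemma \ref{lem3}, where $(U,Z) \in \mathcal{M}$ realizes $\delta_0$. By that lemma we have $E_\varepsilon(\overline{u}_\varepsilon, \overline{z}_\varepsilon) \to \delta_0$ and $\|\nabla E_\varepsilon(\overline{u}_\varepsilon, \overline{z}_\varepsilon)\|_\varepsilon \to 0$, so in particular $(\overline{u}_\varepsilon, \overline{z}_\varepsilon)$ satisfies property $(\mathcal{A})$ for any choice of $0 < c_1 < \delta_0 < c_2$ (and $\varepsilon$ small enough).

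Next, I would invoke Lemma \ref{lem2}: since $(\overline{u}_\varepsilon, \overline{z}_\varepsilon)$ satisfies $(\mathcal{A})$, there exists $(\tilde{u}_\varepsilon, \tilde{z}_\varepsilon) \in \mathcal{N}$ with
\[
E_\varepsilon(\tilde{u}_\varepsilon, \tilde{z}_\varepsilon) = E_\varepsilon(\overline{u}_\varepsilon, \overline{z}_\varepsilon) + o(1).
\]
Since $\delta_\varepsilon = \inf_{\mathcal{N}} E_\varepsilon$ we obtain
\[
\delta_\varepsilon \le E_\varepsilon(\tilde{u}_\varepsilon, \tilde{z}_\varepsilon) = E_\varepsilon(\overline{u}_\varepsilon, \overline{z}_\varepsilon) + o(1) \longrightarrow \delta_0,
\]
and taking $\limsup_{\varepsilon \to 0}$ yields the claim.

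The proof is essentially a bookkeeping exercise; there is no real obstacle here because the heavy lifting has been done in Lemmas \ref{lem3} and \ref{lem2}. The only small point to check is that, with $\delta_0 > 0$ (which follows from $(U,Z)$ being a nontrivial ground state and the Nehari-type identities), one can indeed pick constants $c_1, c_2$ so that $(\mathcal{A})$ holds for the concentrated family, which is immediate from $E_\varepsilon(\overline{u}_\varepsilon,\overline{z}_\varepsilon) \to \delta_0 > 0$.
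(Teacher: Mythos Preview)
Your proof is correct and follows exactly the same approach as the paper: use Lemma \ref{lem3} to verify that $(\overline{u}_\varepsilon,\overline{z}_\varepsilon)$ satisfies $(\mathcal{A})$, then apply Lemma \ref{lem2} to project onto $\mathcal{N}$ with an $o(1)$ change in energy, and finally use $\delta_\varepsilon \le E_\varepsilon(\tilde{u}_\varepsilon,\tilde{z}_\varepsilon)$ before passing to the $\limsup$. Your additional remark about choosing $c_1,c_2$ to verify $(\mathcal{A})$ is a harmless clarification of a point the paper leaves implicit.
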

\begin{proof}
From Lemma \ref{lem3}, we have that $(\overline{u}_{\varepsilon},\overline{z}_{\varepsilon})$ satisfies assumption $(\mathcal{A})$. Hence, using Lemma \ref{lem2}, we have
$$E_{\varepsilon}(\overline{u}_{\varepsilon},\overline{z}_{\varepsilon})=E_{\varepsilon}(\tilde{u}_{\varepsilon},\tilde{z}_{\varepsilon})+o(1)\geq \delta_{\varepsilon}+o(1).$$
So the conclusion follows by taking the $\limsup$ in both sides.
\end{proof}

\noindent
We also provide a lower bound for $ \delta_{\varepsilon}$:
\begin{lemma}
$$\liminf_{\varepsilon\to 0}\delta_{\varepsilon}\geq \delta_{0}.$$
\end{lemma}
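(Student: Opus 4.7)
The plan is to extract a nontrivial critical point of the limit problem (\ref{eqlim}) from a sequence of minimizers of $E_\varepsilon$ on $\mathcal{N}$, and then to invoke the characterization of $\delta_0$ as the ground-state level of $E$.

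I first fix $\varepsilon_n \to 0$ and pick $(u_n,z_n) \in \mathcal{N}$ with $E_{\varepsilon_n}(u_n,z_n) = \delta_{\varepsilon_n}$; by the generalized Nehari construction these are critical points of $E_{\varepsilon_n}$. The previous lemma gives $\delta_{\varepsilon_n} \leq \delta_0 + o(1)$ and (\ref{ineq}) gives $\delta_{\varepsilon_n} \geq c_0 > 0$, so property $(\mathcal{A})$ holds and Proposition \ref{propbound} applies. Next, I rescale: setting $\tilde u_n(s) := u_n(\varepsilon_n s)$ and $\tilde z_n(s) := z_n(\varepsilon_n s)$ on $I_n := [-1/\varepsilon_n, 1/\varepsilon_n]$, extended $2/\varepsilon_n$-periodically to $\R$, a direct change of variables shows that $(\tilde u_n,\tilde z_n)$ solves the autonomous system (\ref{eqlim}) on $\R$ and that
\[
E_{\varepsilon_n}(u_n,z_n) = \frac{1}{2}\int_{I_n}\bigl(|\tilde u_n'|^2 + \tfrac{1}{4}\tilde u_n^2 + \langle A\tilde z_n,\tilde z_n\rangle - \tilde u_n^2|\tilde z_n|^2\bigr)\,ds = \frac{1}{2}\int_{I_n}\tilde u_n^2|\tilde z_n|^2\,ds,
\]
using the Euler-Lagrange identities on $\mathcal{N}$. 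In particular, Proposition \ref{propbound} translates into uniform bounds for $\|\tilde u_n\|_{H^1(I_n)}$ and $\|\tilde z_n\|_{H^{1/2}(I_n)}$.

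I then run a non-vanishing argument on the scalar component. From (\ref{lower}), $\int_{I_n}\tilde u_n^2|\tilde z_n|^2\,ds \geq c_0>0$, while the uniform Sobolev embedding bounds $\|\tilde z_n\|_{L^4(I_n)}$. If $\sup_{y\in\R}\int_{y-1}^{y+1}\tilde u_n^2\,ds \to 0$, the one-dimensional Lions lemma applied to the $H^1$-bounded $2/\varepsilon_n$-periodic sequence would force $\tilde u_n \to 0$ in $L^4$, contradicting $\int \tilde u_n^2|\tilde z_n|^2 \geq c_0$. Hence there exist $y_n\in I_n$ and $\eta>0$ with $\int_{y_n-1}^{y_n+1}\tilde u_n^2\,ds \geq \eta$. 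After translation I may assume $y_n = 0$; the translated sequence, still denoted $(\tilde u_n,\tilde z_n)$, is uniformly bounded in $H^1_{loc}(\R)\times H^{1/2}_{loc}(\R)$, so up to a subsequence it converges weakly to some $(U,Z)$ and strongly in $L^p_{loc}(\R)$ for every finite $p$. Since the intervals $I_n$ exhaust $\R$ and (\ref{eqlim}) is autonomous, passing to the limit in the weak formulation shows $(U,Z)$ solves (\ref{eqlim}) on all of $\R$.

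The strong $L^2_{loc}$-convergence gives $\int_{-1}^{1} U^2 \geq \eta$, hence $U\not\equiv 0$; since every bounded $H^1(\R)$-solution of $-U''+U/4 = 0$ vanishes, $Z$ must be nontrivial as well. Therefore $(U,Z)$ is a nontrivial critical point of the limit functional, and $E(U,Z)\geq \delta_0$. Applying Fatou's lemma on expanding subintervals and using local $L^4$-strong convergence to pass to the limit in $\tilde u_n^2|\tilde z_n|^2$, one obtains
\[
\delta_0 \leq E(U,Z) = \frac{1}{2}\int_\R U^2|Z|^2 \leq \liminf_{n\to\infty}\frac{1}{2}\int_{I_n}\tilde u_n^2|\tilde z_n|^2\,ds = \liminf_{n\to\infty}\delta_{\varepsilon_n},
\]
which is the claim. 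The main obstacle is the non-vanishing step: the Lions-type lemma must be applied on the growing periodic domains $I_n$ with constants uniform in $n$, and one must then verify that both components of the profile survive in the weak limit. Once this is settled, the energy comparison via Fatou is routine.
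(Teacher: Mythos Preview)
Your proof is correct and follows essentially the same strategy as the paper: pick ground states, rescale by $\varepsilon$, run a Lions-type non-vanishing argument to locate a concentration point, translate, pass to a weak limit solving the limiting problem (\ref{eqlim}), and compare energies via Fatou. The only noteworthy difference is that the paper asserts simultaneous non-vanishing of \emph{both} $u_\varepsilon$ and $z_\varepsilon$ at the same point $y_\varepsilon$ (its claim (\ref{kappa})), whereas you run Lions only on the scalar component and then deduce $Z\not\equiv 0$ from the equation $-U''+\tfrac14 U = U|Z|^2$ by observing that $-U''+\tfrac14 U=0$ has no nontrivial $H^1(\R)$ solution; this is a cleaner way to handle that step.
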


\begin{proof}
Let $(u_{\varepsilon},z_{\varepsilon})$ be a minimizer of $E_{\varepsilon}$ on $\mathcal{N}$. Then, it is a solution to the system $(\ref{epseq})$ and $E_{\varepsilon}(u_{\varepsilon},z_{\varepsilon})=\delta_{\varepsilon}$. We first claim that there exist $r_{0}>0$, $\kappa_{1},\kappa_{2}>0$ and $y_{\varepsilon} \in [-1,1]$, such that
\begin{equation}\label{kappa}
\frac{1}{\varepsilon}\int_{|t-y_{\varepsilon}|\leq \varepsilon r_{0}}|u_{\varepsilon}|^{2} dt >\kappa_{1}, \qquad \frac{1}{\varepsilon}\int_{|t-y_{\varepsilon}|\leq \varepsilon r_{0}}|z_{\varepsilon}|^{2} dt>\kappa_{2}.
\end{equation}
In order to prove this claim, we assume by contradiction that the previous inequalities are false. Without loss of generality, we can assume that the first inequality does not hold; for the second one we argue in the same way. Then, for every $r>0$,

$$\lim_{\varepsilon\to 0} \left(\frac{1}{\varepsilon}\sup_{y\in [-1,1]}\int_{|t-y|\leq 2\varepsilon r}|u_{\varepsilon}|^{2}\ dt \right) =0.$$

\noindent
Now we take $\beta_{\varepsilon,y}$ a  cut-off function in $(y-2r\varepsilon,y+2r\varepsilon)$, such that $\beta_{\varepsilon,y}(t)=1$ for $|t-y|\leq r\varepsilon$. Therefore, we have
$$\lim_{\varepsilon\to 0} \left(\frac{1}{\varepsilon}\sup_{y\in [-1,1]}\int_{[-1,1]}|\beta_{\varepsilon,y}u_{\varepsilon}|^{2}  dt \right)=0.$$
This yields $\|u_{\varepsilon}\|_{L^{q},\varepsilon}\to 0$ for all $2\leq q <\infty$.
Indeed, let $p>q>2$ and $s>0$ so that $q=sp+2(1-s)$. Then we have
\begin{align}
\frac{1}{\varepsilon}\int_{[-1,1]}|\beta_{\varepsilon,y}u_{\varepsilon}|^{q}\ dt &\leq  \left(\frac{1}{\varepsilon}\int_{[-1,1]}|\beta_{\varepsilon,y}u_{\varepsilon}|^{2}\ dt \right)^{1-s} \left(\frac{1}{\varepsilon}\int_{[-1,1]}|\beta_{\varepsilon,y}u_{\varepsilon}|^{p}\ dt\right)^{s}\notag\\
&\leq C\left(\frac{1}{\varepsilon}\int_{[-1,1]}|\beta_{\varepsilon,y}u_{\varepsilon}|^{2}\ dt \right)^{1-s} \|\beta_{\varepsilon,y}u_{\varepsilon}\|_{1,\varepsilon}^{s}.\notag
\end{align}
Hence, if we cover $[-1,1]$ by subintervals of radius $r\varepsilon$ with each subinterval overlapping with at most two others, we get
$$\frac{1}{\varepsilon}\int_{[-1,1]}|u_{\varepsilon}|^{q}\ dt \leq C_{2} \left(\sup_{y\in [-1,1]}\frac{1}{\varepsilon}\int_{|t-y|\leq 2\varepsilon r}|u_{\varepsilon}|^{2}\ dt\right)^{1-s}\|u_{\varepsilon} \|_{1,\varepsilon}^{s}.$$
Thus, one has using $(\ref{lower})$ 
$$
c_{0} \leq \frac{1}{\varepsilon}\int_{[-1,1]}u_{\varepsilon}^{2}|z_{\varepsilon}|^{2}\ dt
\leq \|u_{\varepsilon}\|_{L^{4},\varepsilon}^{2}\|z_{\varepsilon}\|_{L^{4},\varepsilon}^{2}
\leq C\|u_{\varepsilon}\|_{L^{4},\varepsilon}^{2}\|z_{\varepsilon}\|_{\frac{1}{2},\varepsilon}^{2} ,
$$
%\begin{align}
%c_{0}&\leq \frac{1}{\varepsilon}\int_{[-1,1]}u_{\varepsilon}^{2}|z_{\varepsilon}|^{2}\ dt \notag\\
%&\leq \|u_{\varepsilon}\|_{L^{4},\varepsilon}^{2}\|z_{\varepsilon}\|_{L^{4},\varepsilon}^{2}\notag\\
%&\leq C\|u_{\varepsilon}\|_{L^{4},\varepsilon}^{2}\|z_{\varepsilon}\|_{\frac{1}{2},2,\varepsilon}^{2}
%\end{align}
and the contradiction follows by passing to the limit; hence first claim is proved.\\
Next, we set
$$U_{\varepsilon}(s)=u_{\varepsilon}(\varepsilon s+y_{\varepsilon})\quad \text{ and } \quad Z_{\varepsilon}(s)=z_{\varepsilon}(\varepsilon s+y_{\varepsilon}) .$$
We notice that $(U_{\varepsilon},Z_{\varepsilon})$ is bounded in 
$$\left(H^{1}_{loc}(\R; \R)\times H_{loc}^{\frac{1}{2}}(\R; \C)\right)\cap \left( L^{p}(\R; \R)\times L^{p}(\R; \C)\right)$$
for all $1<p<\infty$. So we can extract a convergent subsequence that converges strongly in $L^{p}$ for all $p$ and weakly in $H^{1}_{loc}\times H^{\frac{1}{2}}_{loc}$ to $(U_{0},Z_{0})$. Therefore, if we take a test function $h\in H^{1}(\R; \R)$ that is compactly supported in $[-R,R]$, we get
$$ \int_{\R}\left[-U''_{\varepsilon}+\frac{1}{4}U_{\varepsilon}-U_{\varepsilon}|Z_{\varepsilon}|^{2}\right]h\ ds =
\frac{1}{\varepsilon}\int_{|t-y_{\varepsilon}|\leq \varepsilon R}\left[-\varepsilon^{2}u_{\varepsilon}''+\frac{1}{4}u_{\varepsilon}-u_{\varepsilon}|z_{\varepsilon}|^{2}\right]\tilde{h}\ dt=0,$$
where $\tilde{h}=h\left(\frac{t-y_{\varepsilon}}{\varepsilon}\right)$. Similarly, taking $\varphi \in H^{\frac{1}{2}}(\R; \C)$, compactly supported in $[-R,R]$, we get
$$\int_{\R} \langle AZ_{\varepsilon}-U^{2}_{\varepsilon}Z_{\varepsilon},\varphi \rangle\ ds=\frac{1}{\varepsilon}\int_{|t-y_{\varepsilon}|\leq \varepsilon R}\langle A_{\varepsilon}z_{\varepsilon}-u_{\varepsilon}^{2}z_{\varepsilon},\tilde{\varphi}\rangle\ dt=0,$$
where $\tilde{\varphi}(t)=\varphi\left(\frac{t-y_{\varepsilon}}{\varepsilon}\right)$. Hence, $(U_{0},Z_{0})$ is a solution of $(\ref{eqlim})$. Moreover, from (\ref{kappa}), we have
$$\int_{[-r_{0},r_{0}]}|U_{\varepsilon}|^{2}\ ds =\frac{1}{\varepsilon}\int_{|t-y_{\varepsilon}|\leq \varepsilon r_{0}}|u_{\varepsilon}|^{2}\ dt>\kappa_{1}>0.$$
and
$$\int_{[-r_{0},r_{0}]}|Z_{\varepsilon}|^{2}\ ds =\frac{1}{\varepsilon}\int_{|t-y_{\varepsilon}|\leq \varepsilon r_{0}}|z_{\varepsilon}|^{2}\ dt>\kappa_{1}>0.$$
Therefore,
$$\int_{[-r_{0},r_{0}]}|U_{0}|^{2}\ ds \not =0 , \qquad  \int_{[-r_{0},r_{0}]} |Z_{0}|^{2} \ ds \not=0.$$
Thus, $(U_{0},Z_{0})$ is not trivial and
$$E(U_{0},Z_{0})\geq \delta_{0}.$$
On the other hand, we have
$$ E_{\varepsilon}(u_{\varepsilon},z_{\varepsilon}) =\frac{1}{2\varepsilon}\int_{|t-y_{\varepsilon}|\leq 1}|u_{\varepsilon}|^{2}|z_{\varepsilon}|^{2}  dt
= \frac{1}{2}\int_{|s|\leq\frac{1}{\varepsilon}}|U_{\varepsilon}|^{2}|Z_{\varepsilon}|^{2} ds $$

%\begin{align}
%E_{\varepsilon}(u_{\varepsilon},z_{\varepsilon})&=\frac{1}{2\varepsilon}\int_{|t-y_{\varepsilon}|\leq 1}|u_{\varepsilon}|^{2}|z_{\varepsilon}|^{2}\ dt\notag\\
%&=\frac{1}{2}\int_{|s|\leq\frac{1}{\varepsilon}}|U_{\varepsilon}|^{2}|Z_{\varepsilon}|^{2}\ ds \notag
%\end{align}

\noindent
Therefore,
$$\liminf_{\varepsilon\to 0} \delta_{\varepsilon}=\liminf_{\varepsilon\to 0}E_{\varepsilon}(u_{\varepsilon},z_{\varepsilon})=E(U_{0},Z_{0})\geq \delta_{0}.$$
\end{proof}

\noindent
With the proof of the previous Lemma, we have completed the study of the asymptotic behaviour of the periodic solutions $(u_{\varepsilon},z_{\varepsilon})$ obtained by minimizing $E_{\varepsilon}$ on $\mathcal{N}$. It only remains to prove that these solutions $(u_{\varepsilon},z_{\varepsilon})$ are different from the equilibrium solution for $\varepsilon$ small enough.

\begin{lemma}
There exists $\varepsilon_{0}>0$ such that for $\varepsilon<\varepsilon_{0}$ the ground state solution of $(\ref{epseq})$ is different from the equilibrium solution.
\end{lemma}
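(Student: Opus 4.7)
The plan is to compare the energy of the constant equilibrium solution against $\delta_\varepsilon$ and to exploit the fact that the former blows up with $T=1/\varepsilon$ while the latter stays bounded. The equilibrium point $P_+=(1,0,1/(2\sqrt{2}),1/(2\sqrt{2}))$ of the Hamiltonian system corresponds to the constant pair $(u_{eq},z_{eq})\equiv(1,z_0)$, where $z_0\in\C^2$ is identified with the vector $(1/(2\sqrt{2}),1/(2\sqrt{2}))$, so that $JBz_0=z_0$ and $|z_0|^2=\tfrac14$.

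The first step is to verify that $(u_{eq},z_{eq})$ is a solution of \eqref{epseq} and lies in the Nehari manifold $\mathcal{N}$. This is essentially arithmetic: since $u_{eq}'=0$, the first equation reduces to $\tfrac14=|z_{eq}|^2$, which holds, and since $z_{eq}$ is constant with $JBz_0=z_0$ we get $A_\varepsilon z_{eq}=JBz_{eq}=z_{eq}=u_{eq}^2z_{eq}$, so the spinor equation and the projection condition $P^-(A_\varepsilon z_{eq}-u_{eq}^2z_{eq})=0$ are automatically satisfied. The two scalar Nehari identities both reduce to $\tfrac12=\tfrac12$.

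The second step is a one-line computation of the energy along the equilibrium:
\begin{equation*}
E_\varepsilon(u_{eq},z_{eq})=\frac{1}{2\varepsilon}\int_{-1}^{1}\Big(\tfrac14+\tfrac14-\tfrac14\Big)\,ds=\frac{1}{4\varepsilon},
\end{equation*}
which diverges as $\varepsilon\to 0$. By contrast, the two lemmas immediately preceding the statement yield $\delta_\varepsilon\to\delta_0$ where $\delta_0=E(U,Z)$ is the finite ground-state level of the limiting problem on $\R$, hence $\delta_\varepsilon$ stays uniformly bounded as $\varepsilon\to 0$.

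Combining these two facts, there exists $\varepsilon_0>0$ such that for every $\varepsilon<\varepsilon_0$
\begin{equation*}
\delta_\varepsilon<\frac{1}{4\varepsilon}=E_\varepsilon(u_{eq},z_{eq}),
\end{equation*}
so the minimizer $(u_\varepsilon,z_\varepsilon)$ of $E_\varepsilon$ on $\mathcal{N}$ cannot coincide with the equilibrium pair. There is no real obstacle in this argument: the only delicate check is that the equilibrium genuinely belongs to $\mathcal{N}$ (in particular that the projection condition holds), and this is automatic because $A_\varepsilon z_{eq}-u_{eq}^2z_{eq}=0$ identically; the rest is the asymptotic comparison $\delta_\varepsilon=O(1)$ versus $E_\varepsilon(u_{eq},z_{eq})=\Theta(1/\varepsilon)$.
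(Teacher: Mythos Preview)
Your proof is correct and follows essentially the same route as the paper's: compute $E_\varepsilon$ at the constant equilibrium, observe it equals $\tfrac{1}{4\varepsilon}\to\infty$, and compare with the uniform bound $\delta_\varepsilon\to\delta_0<\infty$. The paper's own argument is a two-line version of this, writing $E_\varepsilon(u_{eq},z_{eq})=\tfrac{1}{2\varepsilon}\int|u|^2|z|^2=\tfrac{1}{4\varepsilon}$ directly; your explicit check that $(u_{eq},z_{eq})\in\mathcal{N}$ and the direct evaluation of the three terms in $E_\varepsilon$ are a welcome addition but do not change the underlying idea.
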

\begin{proof}
This is easily verified. Indeed, if
$$u=1, \qquad z=\left(\begin{array}{c}
\pm \frac{1}{2\sqrt{2}}\\
\\
\pm \frac{1}{2\sqrt{2}}
\end{array}
\right),$$
then $$E_{\varepsilon}(u,z)=\frac{1}{2\varepsilon}\int_{[-1,1]}|u|^{2}|z|^{2}\ dt =\frac{1}{4\varepsilon}\to \infty>\delta_{0}.$$
\end{proof}

\noindent
This finish the description of the global picture of the Hamiltonian system, by finding a family of periodic solutions converging to the homoclinic orbit given in (\ref{UPSI}) and thus proving the second part of the main theorem. Moreover, Corollary \ref{Corollary1} follows after a change in cylindrical coordinates, which allows to pass from (\ref{UPSI}) to (\ref{UPSIR3}).

\end{document}